\newtheorem{thm}{Theorem}[section]
\newtheorem{cor}[thm]{Corollary}
\newtheorem{lem}[thm]{Lemma}
\newtheorem{prop}[thm]{Proposition}
\newtheorem{rmk}[thm]{Remark}
\begin{document}
\title{On semisimple Hopf algebras of dimension $2q^3$}
\author[J. Dong, S. Wang]{Jingcheng Dong$^{a,b}$, Shuanhong Wang$^a$}

\address[a]{Department of mathematics, Southeast University, Nanjing 210096, Jiangsu, People's Republic of
China}
\address[b]{College of Engineering, Nanjing Agricultural University, Nanjing
210031, Jiangsu, People's Republic of China}

\email[J. Dong]{dongjc@njau.edu.cn}

\email[S. Wang]{shuanhwang2002@yahoo.com}

\begin{abstract}
Let $q$ be a prime number, $k$ an algebraically closed field of
characteristic $0$, and $H$ a semisimple Hopf algebra of dimension
$2q^3$. This paper proves that $H$ is always semisolvable. That is,
such Hopf algebras can be obtained by (a number of) extensions from
group algebras or duals of group algebras.
\end{abstract}
\keywords{semisimple Hopf algebra; semisolvability; Radford
biproduct; character; Drinfeld double}

\subjclass[2000]{16W30} \maketitle

\section{Introduction}\label{sec1}
The notions of upper and lower semisolvability for
finite-dimensional Hopf algebras were introduced by Montgomery and
Whiterspoon \cite{Montgomery}, as generalizations of the notion of
solvability for finite groups. In particular, if a
finite-dimensional Hopf algebra $A$ is semisolvable then $A$ can be
obtained by a number of extensions from group algebras or duals of
group algebras. Therefore, in analogy with the situations for finite
groups, it is enough for many applications to know that a Hopf
algebra is semisolvable.

The known examples of semisimple Hopf algebras which are
semisolvable are those of dimension $p^n,pq^2,pqr$ and those of
dimension less than $60$ (except $36$), where $p,q,r$ are distinct
prime numbers and $n$ is a natural number. See
\cite{Montgomery,Etingof2,Natale4} for details. Our present work is
devoted to providing a new class of semisimple Hopf algebras which
are semisolvable. It can also be viewed as a generalization of
\cite[Chapter 12]{Natale4}.

The paper is organized as follows. In Section \ref{sec2}, we recall
the definitions and some of the basic properties of semisolvability,
characters, Radford biproducts and Drinfeld double, respectively.
Some useful lemmas are also contained in this section.

In Section \ref{sec3}, we present our main results. Let $G(H)$
denote the group of group-like elements in a semisimple Hopf algebra
$H$. By examining every possible order of $G(H)$, we prove that if
the dimension of $H$ is $2q^3$ then $H$ is semisolvable, where $q$
is a prime number.

Throughout this paper, all modules and comodules are left modules
and left comodules, and moreover they are finite-dimensional over an
algebraically closed field $k$ of characteristic $0$. $\otimes$,
${\rm dim}$ mean $\otimes _k$, ${\rm dim}_k$, respectively. If $G$
is a finite group, $kG$ denotes the group algebra of $G$, and $k^G$
means $(kG)^*$. If $g\in G$ then $\langle g\rangle$ denotes the
subgroup of $G$ generated by $g$. Further $C_n$ denotes the cyclic
group of order $n$.  For two positive integers $m$ and $n$,
$gcd(m,n)$ denotes the greatest common divisor of $m,n$. Our
references for the theory of Hopf algebras are \cite{Montgomery2} or
\cite{Sweedler}.

\section{Preliminaries}\label{sec2}

\subsection{Characters}Throughout this subsection, $H$ will be a semisimple Hopf
algebra over $k$. The main result in \cite{Larson} states that $H$
is also cosemisimple.

We next recall some of the terminology and conventions from
\cite{Nichols} that will be used throughout this paper.

Let $V$ be an $H$-comodule. The character of $V$ is the element
$\chi=\chi_V\in H$ defined by $\langle f,\chi\rangle={\rm Tr}_V(f)$
for all $f\in H^*$. The degree of $\chi$ is defined to be the
integer ${\rm deg}\chi=\varepsilon(\chi)={\rm dim}V$. We shall use
$X_t$ to denote the set of all irreducible characters of $H$ of
degree $t$.

All irreducible characters of $H$ span a subalgebra $R(H^*)$ of $H$,
which is called the character algebra of $H^*$. The antipode $S$
induces an anti-algebra involution $*: R(H^*)\to R(H^*)$, given by
$\chi\mapsto\chi^*:=S(\chi)$.

Let $\chi_U,\chi_V\in R(H^*)$ be the characters of the $H$-comodules
$U$ and $V$, respectively. The integer $m(\chi_U,\chi_V)={\rm
dimHom}^H(U,V)$ is defined to be the multiplicity of $U$ in $V$. Let
$\widehat{H}$ denote the set of irreducible characters of $H$. Then
$\widehat{H}$ is a basis of $R(H^*)$. If $\chi\in R(H^*)$, we may
write $\chi=\sum_{\alpha\in \widehat{H}}m(\alpha,\chi)\alpha$.

For any group-like element $g$ in $G(H)$, $m(g,\chi\chi^{*})>0$ if
and only if $m(g,\chi\chi^{*})= 1$ if and only if $g\chi=\chi$ for
every $\chi \in \widehat{H}$. The set of such group-like elements
forms a subgroup of $G(H)$, of order at most $({\rm deg}\chi)^2$.
See \cite[Theorem 10]{Nichols}. Denote this subgroup by $G[\chi]$.
In particular, we have
$$\chi\chi^*=\sum_{g\in G[\chi]}g+\sum_{\alpha\in \widehat{H},{\rm
deg}\alpha>1}m(\alpha,\chi\chi^*)\alpha.\eqno(2.1)$$

A subalgebra $A$ of $R(H^*)$ is called a standard subalgebra if $A$
is spanned by irreducible characters of $H$. Let $X$ be a subset of
$\widehat{H}$. Then $X$ spans a standard subalgebra of $R(H^*)$ if
and only if the product of characters in $X$ decomposes as a sum of
characters in $X$. There is a bijection between $*$-invariant
standard subalgebras of $R(H^*)$ and Hopf subalgebras of $H$. See
\cite[Theorem 6]{Nichols}.

$H$ is said to be of type $(d_1,n_1;\cdots;d_s,n_s)$ as a coalgebra
if $d_1=1,d_2,\cdots,d_s$ are the dimensions of the irreducible
$H$-comodules and  $n_i$ is the number of the non-isomorphic
irreducible $H$-comodules of dimension $d_i$. That is, as a
coalgebra, $H$ is isomorphic to a direct sum of full matrix
coalgebras
$$H\cong k^{(n_1)}\oplus \bigoplus_{i=2}^{s}M_{d_i}(k)^{(n_i)}.\eqno(2.2)$$

If $H^*$ is of type $(d_1,n_1;\cdots;d_s,n_s)$ as a coalgebra, then
$H$ is said to be of type $(d_1,n_1;\cdots;d_s,n_s)$ as an algebra.

\begin{lem}\label{lem1}
Let $\chi$ be an irreducible character of $H$. Then

 (1)\,The order of $G[\chi]$ divides $({\rm deg}\chi)^2$.

 (2)\,The order of $G(H)$ divides $n({\rm deg}\chi)^2$, where $n$ is the
 number of non-isomorphic irreducible characters of degree ${\rm deg}\chi$.
\end{lem}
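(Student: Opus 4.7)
The plan is to treat the two parts separately, since part~(2) follows cleanly from part~(1) by a standard orbit counting argument, while part~(1) is the substantive step that strengthens Nichols' inequality $|G[\chi]|\le(\deg\chi)^2$ to a divisibility.

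For part~(1), I would work with the simple subcoalgebra $C_\chi\subseteq H$ containing $\chi$, which has dimension $(\deg\chi)^2$. Since $g\chi=\chi$ for every $g\in G[\chi]$ and a simple subcoalgebra is determined by any irreducible character it contains, left multiplication by elements of $G[\chi]$ stabilises $C_\chi$. This endows $C_\chi$ with the structure of a left $kG[\chi]$-module. The key step is to exhibit $C_\chi$ as a \emph{free} $kG[\chi]$-module; the Nichols--Zoeller theorem makes $H$ free on both sides over the Hopf subalgebra $kG[\chi]$, and the bi-stability of $C_\chi$ (under right multiplication $C_\chi$ decomposes compatibly as well) should then force the matrix block to decompose into a union of $kG[\chi]$-orbits of equal sizes. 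Once freeness is established, $|G[\chi]|$ divides $\dim C_\chi=(\deg\chi)^2$.

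For part~(2), assume~(1) and let $d:=\deg\chi$ and $X_d=\{\chi_1,\dots,\chi_n\}$ be the set of non-isomorphic irreducible characters of degree $d$. The group $G(H)$ acts on $X_d$ by left multiplication, and by definition the stabiliser of $\chi_i$ is $G[\chi_i]$. Choosing one representative $\chi_i$ from each $G(H)$-orbit, the orbit-stabiliser relation yields
\[
n=\sum_{i\text{ orbit rep}}\frac{|G(H)|}{|G[\chi_i]|},
\]
and multiplying by $d^2$ gives
\[
n\,(\deg\chi)^2 \;=\; |G(H)|\sum_{i}\frac{(\deg\chi_i)^2}{|G[\chi_i]|}.
\]
By~(1) each summand $(\deg\chi_i)^2/|G[\chi_i]|$ is a positive integer, so the right-hand side is $|G(H)|$ times a positive integer, and the claim follows.

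The main obstacle is clearly part~(1): Nichols' Theorem~10 only delivers the weak bound $|G[\chi]|\le(\deg\chi)^2$, whereas what is needed is genuine divisibility, and this requires controlling the $kG[\chi]$-module structure of the matrix block $C_\chi$ (or, equivalently, of $V_\chi\otimes V_\chi^*$) beyond the numerical information available from~(2.1). If part~(1) is already known in the literature, one may simply quote it; otherwise the bimodule/Nichols--Zoeller argument sketched above is the natural route.
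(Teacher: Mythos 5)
The paper gives no argument of its own here: it simply cites the Nichols--Zoeller theorem and Natale's Lemma 2.2.2, and your reconstruction is essentially the standard proof lying behind that citation, so the two coincide in substance. Your part (2) is complete and correct: left multiplication by $G(H)$ permutes the irreducible characters of degree $d$, the stabiliser of $\chi_i$ is exactly $G[\chi_i]$ by the Nichols--Richmond description recalled in (2.1), and the class equation combined with part (1) gives the divisibility. The one soft spot is the freeness step in part (1). Knowing that $H$ is free over $kG[\chi]$ does not by itself make the submodule $C_\chi$ free: over the semisimple algebra $kG[\chi]$ every submodule of a free module is projective, but freeness requires each irreducible to occur with the correct multiplicity, which is not automatic. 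Moreover the parenthetical appeal to right-multiplication stability is both unjustified ($G[\chi]$ is defined by \emph{left} stability $g\chi=\chi$; the right stabiliser is a priori a different subgroup, attached to $\chi^*\chi$ rather than $\chi\chi^*$) and unnecessary. What actually closes the gap is the relative Hopf module form of the Nichols--Zoeller theorem: $C_\chi$ is a left $kG[\chi]$-module under multiplication and a right $H$-comodule under $\Delta$, compatibly, i.e., an object of ${}_{kG[\chi]}\mathcal{M}^H$, and every such object is free as a $kG[\chi]$-module. Hence $|G[\chi]|$ divides $\dim C_\chi=({\rm deg}\chi)^2$. This is precisely the argument of the cited Lemma 2.2.2 of Natale, and with that substitution your proof is complete.
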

\begin{proof} It follows from Nichols-Zoeller Theorem \cite{Nichols2}. See
also \cite[Lemma 2.2.2]{Natale1}.
\end{proof}

\subsection{Semisolvability}\label{sec2-2}

Let $B$ be a finite-dimensional Hopf algebra over $k$. A Hopf
subalgebra $A\subseteq B$ is called normal if $h_1AS(h_2)\subseteq
A$ or $S(h_1)Ah_2\subseteq A$, for all $h\in B$. If $B$ does not
contain proper normal Hopf subalgebras then it is called simple.
Dualizing the notion of normal Hopf subalgebra, we obtain the notion
of conormal quotient Hopf algebra. The notion of simplicity is
self-dual, that is, $B$ is simple if and only if $B^*$ is simple.

Let $K\subseteq A$ be a normal Hopf subalgebra. Then $B = A/AK^+$ is
a conormal quotient Hopf algebra and the sequence of Hopf algebra
maps $k\to K\to A\to B\to k$ is an exact sequence of Hopf algebras.
In this case we shall say that $A$ is an extension of $B$ by $K$.

The extension above is called abelian if $K$ is commutative and $B$
is cocommutative. In this case $K\cong  k^N$ and $B\cong kF$, for
some finite groups $N$ and $F$.

The following lemma is a direct consequence of \cite[Corollary
1.4.3]{Natale4}.

\begin{lem}\label{lem2}Let $\pi: A\to B$ be a conormal quotient Hopf algebra. Suppose that
${\rm dim}B$ is the least prime number dividing ${\rm dim}A$. Then
$G(B^*)\subseteq Z(A^*)\cap G(A^*)$.
\end{lem}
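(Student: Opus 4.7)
The plan is to dualize and apply a Clifford-type argument. Setting $K := \pi^*(B^*)$, one obtains a normal Hopf subalgebra of $A^*$ of prime dimension $p = \dim B$, where $p$ is the smallest prime dividing $\dim A^* = \dim A$. By Zhu's theorem on Hopf algebras of prime dimension, $K \cong kC_p$, so $K$ is commutative and generated as an algebra by $G(K) \cong G(B^*)$.

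The containment $G(B^*) \subseteq G(A^*)$ is immediate, since the injection $\pi^* : B^* \hookrightarrow A^*$ preserves group-likes. For the centrality, I would proceed in two steps. First, I restrict the left adjoint action of $A^*$ on $K$ to the subgroup $G(A^*)$: each group-like $a \in G(A^*)$ acts on $K$ by conjugation $x \mapsto a x a^{-1}$, preserving $G(K) \cong C_p$, and one obtains a homomorphism
$$\rho : G(A^*) \longrightarrow \mathrm{Aut}(C_p) \cong (\mathbb{Z}/p\mathbb{Z})^{\times}.$$
By the Nichols-Zoeller theorem, $|G(A^*)|$ divides $\dim A$, so every prime factor of $|G(A^*)|$ is at least $p$; in particular $\gcd(|G(A^*)|, p-1) = 1$, forcing $\rho$ to be trivial. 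Hence $G(A^*)$ centralizes $G(K)$ element-wise.

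Second, I would upgrade this to full centrality in $A^*$ via the Clifford-type decomposition associated to the normal Hopf subalgebra $K$: every irreducible $A^*$-module $V$ restricts to $K$ as a direct sum of $1$-dimensional $K$-characters forming a single $G(A^*)$-orbit in $\widehat{K}$. By the previous step every such orbit is a singleton, so $V|_K$ is isotypic and each element of $G(K)$ acts on $V$ as a scalar. Since $A^*$ is semisimple, running over all irreducibles forces $G(K) \subseteq Z(A^*)$, and combining this with the first part yields $G(B^*) \subseteq Z(A^*) \cap G(A^*)$.

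The main obstacle is carrying out the Clifford-theoretic second step cleanly in the Hopf-algebraic framework (as opposed to the classical group-extension setting), and it is precisely this that is packaged in \cite[Corollary 1.4.3]{Natale4}, to which the lemma appeals.
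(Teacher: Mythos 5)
The paper offers no argument for this lemma beyond the citation of \cite[Corollary 1.4.3]{Natale4}, which is essentially the statement itself, so your proposal has to be judged on its own merits. Your preliminary steps are fine: $K=\pi^*(B^*)$ is a normal Hopf subalgebra of $A^*$ of dimension $p$, Zhu's theorem gives $K\cong kC_p$, the inclusion $G(B^*)\subseteq G(A^*)$ is immediate, and the gcd argument correctly shows that conjugation by $G(A^*)$ fixes $G(K)$ pointwise, since $|G(A^*)|$ divides $\dim A$ by Nichols--Zoeller and is therefore coprime to $p-1$.

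The gap is in the second step. For a normal Hopf subalgebra $K$ of a semisimple Hopf algebra $R$, Clifford theory does \emph{not} assert that the irreducible constituents of $V|_K$ form a single $G(R)$-orbit in $\widehat{K}$; that is the group-algebra statement. The Hopf-algebraic version says the constituents form a single equivalence class for a relation governed by the adjoint action of \emph{all} of $R$ on $K$ (equivalently, conjugation by all irreducible characters of $R$), and such a class is in general a union of several $G(R)$-orbits. Hence triviality of the $G(A^*)$-conjugation on $G(K)$ does not force $V|_K$ to be isotypic: your argument never excludes a class of size greater than $1$ consisting entirely of $G(A^*)$-fixed characters. A correct proof must play the least-prime hypothesis off against the sizes of these larger classes rather than against $|\mathrm{Aut}(C_p)|=p-1$: one checks that the trivial character of $K$ is always alone in its class (because $V^K$ is an $A^*$-submodule of $V$ when $K$ is normal), so a nontrivial class sits inside $\widehat{C_p}\setminus\{\varepsilon\}$ and has size at most $p-1$, while a class-equation or orbit-divisibility argument in the style of Zhu and Masuoka shows its size is divisible only by primes at least $p$, forcing size $1$. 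Your closing remark concedes that the decisive step is ``precisely what is packaged in Corollary 1.4.3''; but that corollary \emph{is} the lemma, so deferring to it means the proposal reduces the statement to the same citation the paper uses rather than proving it, and the intermediate $\mathrm{Aut}(C_p)$ computation, though correct, does not close the argument.
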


Let $\pi:H\to B$ be a Hopf algebra map and consider the subspaces of
coinvariants
$$H^{co\pi}=\{h\in H|(id\otimes \pi)\Delta(h)=h\otimes 1\}, \mbox{and\,}$$
$$^{co\pi}\!H=\{h\in H|(\pi\otimes id)\Delta(h)=1\otimes h\}.$$
Then $H^{co\pi}$ (respectively, $^{co\pi}H$) is a left
(respectively, right) coideal subalgebra of $H$. Moreover, we have
$${\rm dim}H ={\rm dim}H^{co\pi}{\rm dim}\pi(H) ={\rm dim}{}^{co\pi}H{\rm dim}\pi(H).$$

The left coideal subalgebra $H^{co\pi}$ is stable under the left
adjoint action of $H$. Moreover $H^{co\pi} ={}^{co\pi}H$ if and only
if $H^{co\pi}$ is a (normal) Hopf subalgebra of $H$. See
\cite{Schneider} for more details.

The following lemma is taken from \cite[Section 1.3]{Natale4}.
\begin{lem}\label{lem3}
Let $\pi:H\to B$ be a Hopf surjection and $A$ a Hopf subalgebra of
$H$ such that $A\subseteq H^{co\pi}$. Then ${\rm dim}A$ divides
${\rm dim}H^{co\pi}$.
\end{lem}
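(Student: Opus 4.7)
The plan is to upgrade the Nichols--Zoeller freeness statement for the pair $A\subseteq H$ to a freeness statement compatible with the coinvariant subalgebra $H^{co\pi}$, and then conclude by comparing isotypic multiplicities. First I would invoke Nichols--Zoeller for the Hopf subalgebra $A\subseteq H$ to get that $H$ is free as a left $A$-module of rank $r:=\dim H/\dim A$. Second I would invoke Schneider's freeness theorem for the Hopf surjection $\pi\colon H\to B$, which says that the extension $H^{co\pi}\subseteq H$ is $B$-Galois, so that $H$ is free as a left $H^{co\pi}$-module of rank $\dim B$. Equivalently, there is a left $H^{co\pi}$-module isomorphism $H\cong H^{co\pi}\otimes W$ with $\dim W=\dim B$.

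Next I would verify that the hypothesis $A\subseteq H^{co\pi}$ forces $\pi(a)=\varepsilon(a)1_B$ for every $a\in A$: apply $\varepsilon\otimes\mathrm{id}$ to the defining identity $(\mathrm{id}\otimes\pi)\Delta(a)=a\otimes 1$. Hence $H^{co\pi}$ is closed under left multiplication by $A$, and it inherits a left $A$-module structure from $H$. Restricting the isomorphism $H\cong H^{co\pi}\otimes W$ of Step~2 to $A$, and using that $W$ carries only the scalar $A$-action, yields a left $A$-module isomorphism $H\cong (H^{co\pi})^{\dim B}$.

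To close the argument I would compare isotypic decompositions. Because $H$ is semisimple, the Hopf subalgebra $A$ is also semisimple, and its left regular module decomposes as $A\cong \bigoplus_\lambda M_\lambda^{\dim M_\lambda}$ over the isomorphism classes of simple left $A$-modules $M_\lambda$. Writing $H^{co\pi}\cong\bigoplus_\lambda M_\lambda^{n_\lambda}$ as left $A$-modules, the identity $A^r\cong (H^{co\pi})^{\dim B}$ gives $r\dim M_\lambda=n_\lambda\dim B$ for every $\lambda$. Specializing $\lambda$ to the trivial representation ($\dim M_\lambda=1$) yields $n_\lambda=r/\dim B=\dim H^{co\pi}/\dim A$, which is necessarily a nonnegative integer; this is the desired divisibility.

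The principal obstacle is Step~2, the invocation of Schneider's freeness for the quotient $\pi\colon H\to B$ in a form that restricts to a clean left $A$-module decomposition; this is the deep structural ingredient, while the remaining work is elementary multiplicity bookkeeping. An alternative, equally valid route is to invoke a relative Nichols--Zoeller theorem to assert directly that $H^{co\pi}$ is a free right $A$-module, from which $\dim A\mid\dim H^{co\pi}$ is immediate; however, the multiplicity argument above is more self-contained once Schneider's theorem is granted.
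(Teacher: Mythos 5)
Your argument is correct, but note that the paper does not actually prove this lemma: it is quoted verbatim from \cite[Section 1.3]{Natale4}, so there is no internal proof to compare against, and what you have supplied is a genuine self-contained argument. Your route --- Nichols--Zoeller freeness $H\cong A^{\oplus r}$ with $r=\dim H/\dim A$, Schneider's normal basis theorem $H\cong H^{co\pi}\otimes B$ as left $H^{co\pi}$-modules, restriction of the latter along $A\subseteq H^{co\pi}$, and comparison of the multiplicity of the trivial $A$-module on both sides of $A^{\oplus r}\cong (H^{co\pi})^{\oplus\dim B}$ --- is sound, and in fact yields more than the stated divisibility: your identity $r\dim M_\lambda=n_\lambda\dim B$ forces $n_\lambda=(r/\dim B)\dim M_\lambda$ for \emph{every} $\lambda$, so $H^{co\pi}\cong A^{\oplus\,\dim H^{co\pi}/\dim A}$ is free over $A$, which is the form in which the cited source states the result. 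Two small remarks. First, your proof uses semisimplicity of $H$ (hence of $A$) to run the isotypic bookkeeping; this is harmless given the paper's standing conventions, but the alternative you sketch at the end --- $H^{co\pi}$ is a left coideal of $H$ and an $A$-module under multiplication, hence an object of the category of relative $(A,H)$-Hopf modules, all of which are free over $A$ --- is essentially the argument behind the cited lemma and works without semisimplicity, so it is the more faithful (and more general) route. Second, the verification that $\pi(a)=\varepsilon(a)1_B$ for $a\in A$ is correct but not actually needed: the only fact you use is that $H^{co\pi}$ is a subalgebra of $H$ containing $A$, which is part of the hypotheses.
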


By definition, $H$ is called lower semisolvable if there exists a
chain of Hopf subalgebras
$$H_{n+1} = k\subseteq
H_{n}\subseteq\cdots \subseteq H_1 = H$$ such that $H_{i+1}$ is a
normal Hopf subalgebra of $H_i$, for all $i$, and all quotients
$H_{i}/H_{i}H^+_{i+1}$ are trivial. That is, they are isomorphic to
a group algebra or a dual group algebra. Dually, $H$ is called upper
semisolvable if there exists a chain of quotient Hopf algebras
$$H_{(0)} =
H\xrightarrow{\pi_1}H_{(1)}\xrightarrow{\pi_2}\cdots\xrightarrow{\pi_n}H(n)
= k$$ such that $H_{(i-1)}^{co\pi_{i}}$ is a normal Hopf subalgebra
of $H_{(i-1)}$, and all $H_{(i-1)}^{co\pi_i}$ are trivial.

By \cite[Corollary 3.3]{Montgomery}, we have that $H$ is upper
semisolvable if and only if $H^*$ is lower semisolvable. $H$ is
called semisolvable if it is upper semisolvable or lower
semisolvable.

\begin{prop}\label{prop1}
Let $H$ be a semisimple Hopf algebra of dimension $pq^3$, where
$p,q$ are distinct prime numbers. If $H$ is not simple as a Hopf
algebra then it is semisolvable.
\end{prop}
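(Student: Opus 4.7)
The plan is to exploit the classification results summarized in the introduction. Since simplicity is self-dual, the assumption that $H$ is not simple produces a proper normal Hopf subalgebra $k\subsetneq K\subsetneq H$; setting $B=H/HK^+$ gives an exact sequence of Hopf algebras $k\to K\to H\to B\to k$. By the Nichols-Zoeller theorem, $\dim K$ divides $pq^3$ and $\dim K\cdot\dim B=pq^3$, so the possibilities for $(\dim K,\dim B)$ are the six pairs
\[
(p,q^3),\ (q,pq^2),\ (q^2,pq),\ (pq,q^2),\ (q^3,p),\ (pq^2,q).
\]

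In every one of these six cases, both $K$ and $B$ have dimensions drawn from the list $\{p,q,q^2,q^3,pq,pq^2\}$. The results cited in the introduction show that every semisimple Hopf algebra of such a dimension is semisolvable: dimensions $p$ and $q$ force a group algebra; dimensions $q^2$ and $q^3$ are of prime-power dimension and are handled by Masuoka; dimension $pq$ is trivial by Etingof-Gelaki; and dimension $pq^2$ is semisolvable by Natale. Hence in every case both $K$ and $B$ are semisolvable.

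Finally, I would invoke the closure of semisolvability under extensions (see \cite{Montgomery}): a witnessing chain of normal subalgebras for $B$ lifts along the surjection $H\to B$ to a chain of Hopf subalgebras of $H$ whose successive quotients are trivial, and concatenation with a chain for $K\subseteq H$ yields a witnessing chain for $H$. This gives $H$ semisolvable and completes the argument.

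The main obstacle is not any genuine calculation but rather a bookkeeping issue: one must match the direction of semisolvability (upper versus lower) supplied by each cited classification with the direction in which the extension-closure statement is applied. Since semisolvability is defined as either upper or lower semisolvability and these notions are interchanged under $H\mapsto H^*$, this is resolved by dualizing where necessary; everything else is a direct assembly of already-published results.
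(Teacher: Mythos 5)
Your reduction to the six cases for $(\dim K,\dim B)$ and your inventory of the classification results in each dimension match the paper's setup exactly. The gap is in the final step: the ``closure of semisolvability under extensions'' that you attribute to \cite{Montgomery} is not a theorem of that paper, and it is not available in the generality you need. Semisolvability is the disjunction ``upper \emph{or} lower semisolvable,'' and these two notions are witnessed by chains running in opposite directions (nested normal Hopf subalgebras versus towers of conormal quotients). If $K$ happens to be semisolvable only in one direction and $B$ only in the other, no dualization of $H$ repairs the mismatch, since $H\mapsto H^*$ swaps the roles of $K$ and $B$ \emph{and} swaps upper with lower simultaneously. Even in the favourable subcase where, say, $K$ is trivial and $B$ is lower semisolvable, splicing requires lifting a chain of normal Hopf subalgebras of $B$ to Hopf subalgebras of $H$ containing $K$ with the correct dimensions and normality; this needs a correspondence theorem for exact sequences of Hopf algebras that you neither prove nor cite. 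So the step you describe as ``a bookkeeping issue'' is in fact the entire content of the proposition.

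The paper avoids this by never using semisolvability of the pieces as a black box. Instead it extracts from the cited classifications a \emph{finer} structural fact in each case --- triviality itself when $\dim K\in\{q^2,pq\}$, a nontrivial central group-like element when $\dim K=q^3$ (Masuoka's $p^n$-theorem), and a proper normal Hopf subalgebra of $K$ of dimension $p$, $q$, $pq$ or $q^2$ when $\dim K=pq^2$ --- and assembles from these an explicit chain of length at most three, $k\subseteq L\subseteq K\subseteq H$, all of whose subquotients are trivial, which witnesses lower semisolvability directly. In the remaining case $\dim K\in\{p,q\}$, where the normal subalgebra is too small to start a useful lower chain, it switches to the upper-semisolvable definition and builds the tower of quotients $H\to H/HK^+\to\cdots\to k$ by hand, using that $H/HK^+$ (of dimension $q^3$ or $pq^2$) is itself not simple. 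To salvage your argument you would need to replace the appeal to extension-closure with exactly this kind of case-by-case chain construction.
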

\begin{proof}
By assumption, $H$ has a proper normal Hopf subalgebra $K$.
Moreover, by Nichols-Zoeller Theorem \cite{Nichols2}, ${\rm dim}K$
divides ${\rm dim}H=pq^3$. We shall examine every possible ${\rm
dim}K$.

If ${\rm dim}K=q^2$ or $pq$ then $k\subseteq K\subseteq H$ is a
chain such that $K$ and $H/HK^+$ are both trivial (see
\cite{Etingof,Masuoka1}). Hence, $H$ is lower semisolvable.

If ${\rm dim}K=q^3$ then \cite{Masuoka1} shows that $K$ has a
non-trivial central group-like element $g$. Let $L=k\langle
g\rangle$ be the group algebra of the cyclic group $\langle
g\rangle$ generated by $g$. Then $k\subseteq L\subseteq K\subseteq
H$ is a chain such that $L,K/KL^+$ and $H/HK^+$ are all trivial (see
\cite{Zhu}). Hence, $H$ is lower semisolvable.

If ${\rm dim}K=pq^2$ then  \cite[Proposition 9.9]{Etingof2} and
\cite[Theorem 5.4.1]{Natale3} show that $K$ has a proper normal Hopf
subalgebra $L$ of dimension $p,q,pq$ or $q^2$. Then $k\subseteq
L\subseteq K\subseteq H$ is a chain such that $L,K/KL^+$ and
$H/HK^+$ are all trivial. Hence, $H$ is lower semisolvable.

Finally, we consider the case that ${\rm dim}K=p$ or $q$. Let $L$ be
a proper normal Hopf subalgebra of $H/HK^+$ (Notice that $H/HK^+$ is
not simple). Write $\overline{K}=H/HK^+$ and
$\overline{L}=\overline{K}/\overline{K}L^+$.  Then
$H\xrightarrow{\pi_1}\overline{K}\xrightarrow{\pi_2}\overline{L}\xrightarrow{}
k$ is a chain such that every map is normal and $H^{co\pi_1}$,
$(\overline{K})^{co\pi_2}$ are trivial.  Hence, $H$ is upper
semisolvable.
\end{proof}

\begin{rmk}
Let $H$ be a semisimple Hopf algebra of dimension $p^2q^2$, where
$p,q$ are prime numbers. If $H$ is not simple then $H$ is also
semisolvable. Indeed, since every  (quotient) Hopf subalgebra of $H$
is semisolvable, a similar argument as in Proposition \ref{prop1}
will prove the claim.
\end{rmk}

\subsection{Drinfeld double}\label{sec23}For a semisimple Hopf algebra
$H$, $D(H)=H^{*cop}\bowtie H$ will denote the Drinfeld double of
$H$. $D(H)$ is a Hopf algebra with underlying vector space
$H^{*cop}\otimes H$. The main result in \cite{Etingof3} proves that
if $V$ is an irreducible module of $D(H)$, then the dimension of $V$
divides the dimension of $H$.

Let $^H_H\mathcal{YD}$ denote the category of (left-left)
Yetter-Drinfeld modules over $H$. Objects of this category are
vector spaces $V$ endowed with an $H$-coaction $\rho: V\to H\otimes
V$ and an $H$-action $\cdot: H\otimes V\to V$, which satisfies the
compatibility condition $\rho(h\cdot v) = h_1v_{-1}S(h_3)\otimes
h_2\cdot v_0$, for all $v\in V, h\in H$. Morphisms of this category
are $H$-linear and colinear maps.

Majid first proved that the Yetter-Drinfeld category $_H
^H\mathcal{YD}$ can be identified with the category
$_{D(H)}\mathcal{M}$ of left modules over the quantum double $D(H)$.
see \cite[Proposition 2.1]{majid}.

 More details on $D(H)$ can be
found in \cite[Section 10.3]{Montgomery2}. The following theorem
follows directly from \cite[Proposition 9,10]{Radford2}.

\begin{thm}
Suppose that $H$ is a semisimple Hopf algebra.

(1) The map $G(H^*)\times G(H)\to G(D(H))$, given by
$(\eta,g)\mapsto \eta\bowtie g$, is a group isomorphism.

(2) Every group-like element of $D(H)^*$ is of the form $g\otimes
\eta$, where $g\in G(H)$ and $\eta\in G(H^*)$. Moreover, $g\otimes
\eta\in G(D(H)^*)$ if and only if $\eta\bowtie g$ is in the center
of $D(H)$.
\end{thm}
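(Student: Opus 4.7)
The plan is to exploit the fact that $D(H)$, although twisted as an algebra, is as a coalgebra nothing more than the tensor product coalgebra $H^{*cop}\otimes H$. This reduces part (1) to a computation about group-likes in a tensor product coalgebra, while part (2) will be handled by viewing group-likes of $D(H)^*$ as algebra maps $D(H)\to k$.

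For part (1), first I would verify that the map is well-defined: if $\eta\in G(H^*)=G(H^{*cop})$ and $g\in G(H)$, then $\Delta_{D(H)}(\eta\bowtie g)=(\eta\bowtie g_1)\otimes(\eta\bowtie g_2)=(\eta\bowtie g)\otimes(\eta\bowtie g)$ and $\varepsilon(\eta\bowtie g)=1$, using the tensor product coalgebra structure. Injectivity is immediate from the fact that the elements $\eta\otimes g$ with $\eta,g$ ranging over fixed bases are linearly independent. Multiplicativity requires expanding $(\eta\bowtie g)(\eta'\bowtie g')$ via the smash product rule; the key simplification is that, because $g$ is group-like, all three components $g_1,g_2,g_3$ collapse to $g$, and the resulting factor $\eta(g\rightharpoonup\eta'\leftharpoonup g^{-1})$ equals $\eta\eta'(gg'\cdot 1)$ up to the usual identifications, yielding $\eta\eta'\bowtie gg'$. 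Surjectivity follows from the general observation that group-likes of a tensor product coalgebra $C\otimes D$ are exactly $G(C)\times G(D)$: if $x\in G(D(H))$ is written in a basis, the coproduct equation $\Delta(x)=x\otimes x$ forces it to be an elementary tensor of group-likes.

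For part (2), I would use the standard identification of $G(D(H)^*)$ with $\mathrm{Alg}(D(H),k)$. Given $\lambda\in G(D(H)^*)$, restricting $\lambda$ along the inclusions $H^{*cop}\hookrightarrow D(H)$, $\mu\mapsto\mu\bowtie 1$, and $H\hookrightarrow D(H)$, $h\mapsto\varepsilon\bowtie h$, yields algebra maps $H^{*cop}\to k$ and $H\to k$, which correspond to a unique $g\in G(H)$ and $\eta\in G(H^*)$ respectively. Thus $\lambda$ must be the linear functional $\lambda_{g,\eta}(\mu\bowtie h)=\mu(g)\eta(h)$, i.e.\ $\lambda$ has the form $g\otimes\eta$ in $D(H)^*$, showing the first assertion.

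It remains to pin down exactly when $\lambda_{g,\eta}$ is multiplicative on all of $D(H)$. Expanding $\lambda_{g,\eta}((\mu\bowtie h)(\mu'\bowtie h'))=\lambda_{g,\eta}(\mu\bowtie h)\lambda_{g,\eta}(\mu'\bowtie h')$ using the cross-relation $(\mu\bowtie h)(\mu'\bowtie h')=\mu(h_1\rightharpoonup\mu'\leftharpoonup S^{-1}(h_3))\bowtie h_2h'$, and cancelling the common factors $\mu(g)\eta(h')$, reduces the identity (for all $\mu',h$) to
\[
\sum \eta(h_2)\,S^{-1}(h_3)\,g\,h_1 \;=\; \eta(h)\,g\quad\text{in }H.
\]
On the other hand, writing out $(\eta\bowtie g)(\mu\bowtie h)=(\mu\bowtie h)(\eta\bowtie g)$ and comparing coefficients of a basis of $D(H)$ produces precisely the same identity (together with the already-given relations for $g\in G(H)$, $\eta\in G(H^*)$). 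Hence $\lambda_{g,\eta}$ is an algebra map if and only if $\eta\bowtie g$ is central in $D(H)$, completing the proof.

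The main obstacle is the bookkeeping in the final paragraph: keeping track of the reversed comultiplication in $H^{*cop}$ and the adjoint-type factors $h_1\rightharpoonup\mu\leftharpoonup S^{-1}(h_3)$ while verifying that the multiplicativity of $\lambda_{g,\eta}$ and the centrality of $\eta\bowtie g$ yield literally the same scalar equations. Everything else is essentially a direct consequence of the definition of the Drinfeld double and the tensor-coalgebra description of its underlying coalgebra.
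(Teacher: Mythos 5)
The paper offers no argument for this theorem at all: it is stated as a direct quotation of Propositions 9 and 10 of Radford's paper on minimal quasitriangular Hopf algebras, so there is no internal proof to compare yours against. What you have written is essentially the standard verification behind Radford's propositions, and most of it is sound: part (1) correctly reduces to the facts that $D(H)$ carries the tensor product coalgebra structure of $H^{*cop}\otimes H$ (so its group-likes are exactly the $\eta\bowtie g$) and that conjugating an algebra map $\eta'\in G(H^*)$ by a group-like $g$ acts trivially, which makes the map multiplicative; the first half of part (2) correctly uses that $H^{*cop}\bowtie 1$ and $\varepsilon\bowtie H$ are subalgebras whose product recovers all of $D(H)$.

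The one place you are too quick is the final equivalence. Multiplicativity of $\lambda_{g,\eta}$ does reduce to the single family of identities $\sum \eta(h_2)\,S^{-1}(h_3)\,g\,h_1=\eta(h)\,g$. But centrality of $\eta\bowtie g$ does \emph{not} literally reproduce this identity: it splits into two separate families, one from commuting with the elements $\varepsilon\bowtie h$, namely $gh=\sum\eta(h_1)\eta(S^{-1}(h_3))\,h_2\,g$, and one from commuting with the elements $\mu\bowtie 1$, namely $g\rightharpoonup\mu\leftharpoonup g^{-1}=\eta^{-1}\mu\eta$ for all $\mu\in H^*$. The latter is a genuine extra condition, not one of ``the already-given relations for $g\in G(H)$, $\eta\in G(H^*)$.'' All of these families are in fact equivalent to the single relation of the type appearing in Lemma \ref{lem5}, by the usual device of applying an identity to part of an iterated coproduct and cancelling with the antipode, so your strategy does close; but those equivalences must actually be carried out, and the claim that ``comparing coefficients produces precisely the same identity'' is not accurate as stated. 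With that step done properly the argument is complete.
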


\begin{cor}\label{cor1}
Suppose that $H$ is a  semisimple Hopf algebra such that $G(D(H)^*)$
is non-trivial. If $gcd(|G(H)|,|G(H^*)|)=1$ then $H$ or $H^*$ has a
non-trivial central group-like element.
\end{cor}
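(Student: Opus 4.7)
The plan is to exploit part (2) of the theorem stated just above. By hypothesis there is some non-trivial element $g\otimes\eta\in G(D(H)^*)$, with $g\in G(H)$, $\eta\in G(H^*)$ and $(g,\eta)\neq (1,1)$, so that the corresponding $\eta\bowtie g$ lies in the center $Z(D(H))$. Because part (1) of the theorem realises $G(D(H))\cong G(H^*)\times G(H)$ as \emph{groups}, powers behave componentwise, $(\eta\bowtie g)^{k}=\eta^{k}\bowtie g^{k}$.

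The coprimality assumption $\gcd(|G(H)|,|G(H^*)|)=1$ forces $\gcd(|g|,|\eta|)=1$, so by the Chinese Remainder Theorem I can pick integers $k$ and $\ell$ with $\eta^{k}=1$, $g^{k}=g$ and $\eta^{\ell}=\eta$, $g^{\ell}=1$. Then $\varepsilon\bowtie g=(\eta\bowtie g)^{k}$ and $\eta\bowtie 1=(\eta\bowtie g)^{\ell}$ are both central in $D(H)$, and this decouples the two group-like pieces of the original central element.

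To finish, I use the standard algebra embeddings $H\hookrightarrow D(H)$, $h\mapsto\varepsilon\bowtie h$, and $H^{*cop}\hookrightarrow D(H)$, $\eta\mapsto\eta\bowtie 1$. Centrality of $\varepsilon\bowtie g$ in $D(H)$ restricts in particular to centrality of $g$ in the subalgebra $\varepsilon\bowtie H\cong H$; symmetrically, $\eta$ becomes central in $H^{*cop}$, equivalently in $H^*$. Since at least one of $g,\eta$ is non-trivial, either $H$ or $H^*$ acquires a non-trivial central group-like element, as required. I do not anticipate a genuine obstacle here: the only points needing care are that the isomorphism in part (1) of the theorem is one of groups, which legitimises both the componentwise power formula and the Chinese Remainder step, and that the inclusions of $H$ and $H^{*cop}$ into $D(H)$ are algebra maps, so that centrality in $D(H)$ descends; both are standard features of the construction of $D(H)$.
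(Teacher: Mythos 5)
Your proof is correct and follows essentially the same route as the paper's: both pass from $g\otimes\eta\in G(D(H)^*)$ to the central element $\eta\bowtie g$ of $D(H)$ via the preceding theorem, and both use coprimality of the orders of $g$ and $\eta$ to isolate the two components by taking powers of that central element. The only (harmless) difference is that you invoke the Chinese Remainder Theorem to recover $g$ and $\eta$ themselves as central elements, whereas the paper simply raises to the power $n=\mathrm{ord}(g)$ and exhibits the non-trivial central group-like $\eta^{n}$.
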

\begin{proof}
Let $1\neq g\otimes \eta\in G(D(H)^*)$. We may assume that $1\neq
g\in G(H)$, since otherwise $\eta\in G(H^*)$ would be a non-trivial
central group-like element, and similarly we may assume that
$\varepsilon\neq\eta\in G(H^*)$. Since $gcd(|G(H)|,|G(H^*)|)=1$, the
order of $g$ and $\eta$ are different. Assume that the order of $g$
is $n$. Then $(g\otimes \eta)^n=g^n\otimes \eta^n=1\otimes
\eta^n\neq1\otimes \varepsilon$ implies that $\eta^n\bowtie 1$ is in
the center of $D(H)$. Hence, $\eta^n$ is a non-trivial central
group-like element in $G(H^*)$. Similarly, we can prove that $G(H)$
also has a non-trivial central group-like element.
\end{proof}

Let $g\in G(H)$, $\eta\in G(H^*)$, and $V_{g,\eta}$ denote the
one-dimensional vector space endowed with the action $h\cdot 1 =
\eta(h)1$, for all $h\in H$, and the coaction $1\mapsto g\otimes1$.

\begin{lem}\label{lem5}\cite[Lemma 1.6.1]{Natale4}The one-dimensional Yetter-Drinfeld modules of $H$
are exactly of the form $V_{g,\eta}$, where $g\in G(H)$ and $\eta\in
G(H^*)$ are such that $(\eta\rightharpoonup h)g = g(h \leftharpoonup
\eta)$ for all $h\in H$, where $\rightharpoonup$ and
$\leftharpoonup$ are the regular actions of $H^*$ on $H$.
\end{lem}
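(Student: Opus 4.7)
The plan is to first identify all possible module and comodule structures on a one-dimensional vector space, and then to reduce the Yetter--Drinfeld compatibility to the stated identity by a short Sweedler-notation calculation using the antipode axiom.

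First I would observe that any $H$-action on a one-dimensional $k$-vector space is given by an algebra map $\eta:H\to k$, i.e.\ by an element of $G(H^*)$; dually, any $H$-coaction $\rho:V\to H\otimes V$ on a one-dimensional space is of the form $v\mapsto g\otimes v$ for a unique $g\in H$, and coassociativity together with the counit axiom force $g\in G(H)$. Hence every one-dimensional Yetter--Drinfeld module is necessarily of the shape $V_{g,\eta}$ for some $g\in G(H)$ and $\eta\in G(H^*)$, and the question reduces to deciding for which pairs $(g,\eta)$ the YD compatibility holds.

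Writing out $\rho(h\cdot v)=h_{(1)}v_{(-1)}S(h_{(3)})\otimes h_{(2)}\cdot v_{(0)}$ with $v=1$, the left-hand side equals $\eta(h)\,g\otimes 1$ and the right-hand side equals $\sum \eta(h_{(2)})\,h_{(1)}gS(h_{(3)})\otimes 1$, so the YD compatibility is equivalent to
\[
\eta(h)\,g=\sum \eta(h_{(2)})\,h_{(1)}gS(h_{(3)})\qquad\text{for all } h\in H, \qquad (\ast)
\]
while, with the standard conventions $\eta\rightharpoonup h=\sum \eta(h_{(2)})h_{(1)}$ and $h\leftharpoonup\eta=\sum \eta(h_{(1)})h_{(2)}$, the identity appearing in the lemma unpacks to
\[
\sum \eta(h_{(2)})\,h_{(1)}g=\sum \eta(h_{(1)})\,gh_{(2)}\qquad\text{for all } h\in H. \qquad (\ast\ast)
\]

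The main step is therefore to verify $(\ast)\Longleftrightarrow(\ast\ast)$. For $(\ast)\Rightarrow(\ast\ast)$, I would substitute $h\mapsto h_{(1)}$ into $(\ast)$ and right-multiply by $h_{(2)}$; the factor $S(h_{(3)})h_{(4)}=\varepsilon(h_{(3)})1$ then collapses, yielding exactly $(\ast\ast)$. For the converse, substitute $h\mapsto h_{(1)}$ into $(\ast\ast)$ and right-multiply by $S(h_{(2)})$; the identity $h_{(2)}S(h_{(3)})=\varepsilon(h_{(2)})1$ collapses the right-hand side and recovers $(\ast)$. I do not expect a real obstacle here: the only delicate point is careful bookkeeping of Sweedler indices and fixing a consistent convention for $\rightharpoonup$ and $\leftharpoonup$ so that the orderings come out correctly; once that is done the argument is a routine use of the antipode axiom.
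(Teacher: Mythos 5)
Your proof is correct. The paper itself offers no argument for this lemma---it simply cites \cite[Lemma 1.6.1]{Natale4}---and your computation is the standard one underlying that reference: identifying one-dimensional actions with elements of $G(H^*)$ and one-dimensional coactions with group-likes of $H$, then reducing the Yetter--Drinfeld compatibility to the stated identity, is exactly right, and both directions of the equivalence $(\ast)\Leftrightarrow(\ast\ast)$ go through as you describe (after substituting $h\mapsto h_{(1)}$, coassociativity lets you relabel the Sweedler indices, and the antipode axioms $S(h_{(2)})h_{(3)}=\varepsilon(h_{(2)})1$ and $h_{(2)}S(h_{(3)})=\varepsilon(h_{(2)})1$ perform the collapsing).
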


\subsection{Radford biproduct}\label{sec2-3}
Let $A$ be a semisimple Hopf algebra and let ${}^A_A\mathcal{YD}$
denote the braided category of Yetter-Drinfeld modules over $A$. Let
$R$ be a semisimple Yetter-Drinfeld Hopf algebra in
${}^A_A\mathcal{YD}$. Denote by $\rho :R\to A\otimes R$, $\rho
(a)=a_{-1} \otimes a_0 $, and $\cdot :A\otimes R\to R$, the coaction
and action of $A$ on $R$, respectively. We shall use the notation
$\Delta (a)=a^1\otimes a^2$ and $S_R $ for the comultiplication and
the antipode of $R$, respectively.

Since $R$ is in particular a module algebra over $A$, we can form
the smash product (see \cite[Definition 4.1.3]{Montgomery2}). This
is an algebra with underlying vector space $R\otimes A$,
multiplication is given by $$(a\otimes g)(b\otimes h)=a(g_1 \cdot
b)\otimes g_2 h, \mbox{\;for all\;}g,h\in A,a,b\in R,$$ and unit
$1=1_R\otimes1_A$.

Since $R$ is also a comodule coalgebra over $A$, we can dually form
the smash coproduct. This is a coalgebra with underlying vector
space $R\otimes A$, comultiplication is given by $$\Delta (a\otimes
g)=a^1\otimes (a^2)_{-1} g_1 \otimes (a^2)_0 \otimes g_2
,\mbox{\;for all\;}h\in A,a\in R, $$ and counit
$\varepsilon_R\otimes\varepsilon_A$.

As observed by D. E. Radford (see \cite[Theorem 1]{Radford}), the
Yetter-Drinfeld condition assures that $R\otimes A$ becomes a Hopf
algebra with these structures. This Hopf algebra is called the
Radford biproduct of $R$ and $A$. We denote this Hopf algebra by $
R\#A$ and write $a\# g=a\otimes g$ for all $g\in A,a\in R$. Its
antipode is given by
$$S(a\# g)=(1\# S(a_{-1} g))(S_R (a_0 )\# 1),\mbox{\;for
all\;}g\in A,a\in R.$$

A biproduct $R\#A$ as described above is characterized by the
following property(see \cite[Theorem 3]{Radford}): suppose that $H$
is a finite-dimensional Hopf algebra endowed with Hopf algebra maps
$\iota:A\to H$ and $\pi:H\to A$ such that $\pi \iota:A\to A$ is an
isomorphism. Then the subalgebra $R= H^{co\pi}$ has a natural
structure of Yetter-Drinfeld Hopf algebra over $A$ such that the
multiplication map $R\#A\to H$ induces an isomorphism of Hopf
algebras.

Following \cite[Proposition 1.6]{Somm}, if $H\cong R\#A$ is a
biproduct then $H^*\cong R^*\#A^*$ is also a biproduct.

$R$ is called trivial if $R$ is an ordinary Hopf algebra. In
particular, $R$ is an ordinary Hopf algebra if $A$ is normal in $H$,
since $R\cong H/HA^+$ as a coalgebra.

The following lemma is a special case of \cite[Lemma
4.1.9]{Natale4}.
\begin{lem}\label{lem6}
Let $H$ be a semisimple Hopf algebra of dimension $pq^3$, where
$p,q$ are distinct prime numbers. If $p$ divides both $|G(H)|$ and
$|G(H^*)|$, then $H\cong R\#kG$ is a biproduct, where $kG$ is the
group algebra of group $G$ of order $p$, $R$ is a semisimple
Yetter-Drinfeld Hopf algebra in $^{kG}_{kG}\mathcal{YD}$ of
dimension $q^3$.
\end{lem}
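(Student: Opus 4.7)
The plan is to produce the biproduct via Radford's characterization (recalled in Section \ref{sec2-3}): I will construct a Hopf subalgebra $\iota : kG \hookrightarrow H$ and a Hopf surjection $\pi : H \twoheadrightarrow kG$ such that $\pi\iota = \mathrm{id}_{kG}$. The biproduct structure on $R := H^{co\pi}$ and the identification $H \cong R \# kG$ then follow automatically.

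The two ingredients come from the hypothesis. Since $p$ divides $|G(H)|$, Cauchy's theorem yields a subgroup $G \subseteq G(H)$ of order $p$, giving the Hopf algebra inclusion $\iota : kG \hookrightarrow H$. Dually, since $p$ divides $|G(H^*)|$, a subgroup $\Gamma \subseteq G(H^*)$ of order $p$ provides a Hopf subalgebra $k\Gamma \subseteq H^*$, which dualizes to a Hopf algebra surjection $\pi : H \twoheadrightarrow k^\Gamma$. Because $\Gamma$ is cyclic of prime order, $k^\Gamma \cong kG$ as Hopf algebras; fix such an isomorphism $\phi$ and set $\pi_0 := \phi \circ \pi : H \to kG$.

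The heart of the argument is to show that the composite $\pi_0 \iota : kG \to kG$ is an isomorphism; this is the step I expect to be the main obstacle, since a priori the composite could be trivial. Both source and target have dimension $p$, so it suffices to exclude $\pi_0 \iota = \varepsilon$, i.e.\ $\iota(kG) \subseteq H^{co\pi}$. If this inclusion held, then Lemma \ref{lem3} would force $\dim kG = p$ to divide
\[
\dim H^{co\pi} \;=\; \frac{\dim H}{\dim \pi(H)} \;=\; \frac{pq^3}{p} \;=\; q^3,
\]
contradicting $p \neq q$. Hence $\pi_0 \iota$ is a Hopf algebra automorphism of $kG$, and replacing $\pi_0$ by $\pi' := (\pi_0\iota)^{-1}\circ \pi_0$ gives $\pi'\iota = \mathrm{id}_{kG}$.

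Radford's characterization (Section \ref{sec2-3}) then produces the biproduct decomposition $H \cong R \# kG$ with $R = H^{co\pi'}$ a Yetter--Drinfeld Hopf algebra in ${}^{kG}_{kG}\mathcal{YD}$, and $\dim R = \dim H/\dim kG = q^3$. Semisimplicity of $R$ follows from semisimplicity of $H$ and $kG$ together with the standard fact that the Haar integral of a semisimple biproduct $R\#A$ factors as a product of integrals of $R$ and $A$, so $R$ inherits an integral and is semisimple; this is precisely the content of the cited \cite[Lemma 4.1.9]{Natale4}.
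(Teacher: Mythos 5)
Your proof is correct, and it is essentially the argument behind the paper's one-line justification: the paper simply cites \cite[Lemma 4.1.9]{Natale4} without proof, and what you have written is a sound reconstruction of that lemma's proof via Radford's projection theorem, with the only delicate point (that $\pi\iota$ cannot be the trivial endomorphism of $kG$) correctly disposed of by the divisibility statement of Lemma \ref{lem3} applied to $\dim H^{co\pi}=q^3$. The one step you compress --- that a non-bijective Hopf algebra endomorphism of $kC_p$ must send every group-like to $1$ --- is immediate since its image is a Hopf subalgebra of dimension dividing $p$, so no gap results.
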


\section{Semisimple Hopf algebras of dimension $2q^3$}\label{sec3}
In this section, $H$ will be a non-trivial semisimple Hopf algebra
of dimension $2q^3$, where $q$ is a prime number. Our main aim is to
prove that $H$ is semisolvable. By Proposition \ref{prop1}, it
suffices to prove that $H$ is not simple. When $q=2$, the result
follows from \cite[Theorem 3.5]{Montgomery}. When $q=3$, the result
has been obtained in \cite[Chapter 12]{Natale4}. Therefore, in the
rest of this section, we always assume that $q\geq 5$.

Recall that a semisimple Hopf algebra $A$ is called of Frobenius
type if the dimensions of the simple $A$-modules divide the
dimension of $A$. Kaplansky conjectured that every
finite-dimensional semisimple Hopf algebra is of Frobenius type
\cite[Appendix 2]{Kaplansky}. Dually, the Kaplansky's conjecture
says that the dimensions of the simple $A$-comodules divide the
dimension of $A$. It is still an open problem. However, many
examples show that a positive answer to Kaplansky's conjecture would
be very helpful in the classification of semisimple Hopf algebras.

By \cite[Theorem 1.6]{Etingof2}, $H$ is of Frobenius type.
Therefore, the dimension of a simple $H$-comodule can only be
$1,2,q$ or $2q$. It follows that we have an equation
$$2q^3=|G(H)|+4a+q^2b+4q^2c,\eqno(3.1)$$ where $a,b,c$ are the numbers of
non-isomorphic simple $H$-comodules of dimension $2,q$ and $2q$,
respectively. By Nichols-Zoeller Theorem \cite{Nichols2}, the order
of $G(H)$ divides ${\rm dim}H$. Before discussing the
semisolvability of $H$, we make some preparations.

\begin{lem}\label{lem7}
The order of $G(H)$ can not be $q$.
\end{lem}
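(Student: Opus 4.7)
The plan is to argue by contradiction, assuming $|G(H)|=q$, and to combine the dimension equation (3.1) with the character-squared formula (2.1). Since $H$ is of Frobenius type, the nonnegative integers $a,b,c$ appearing in (3.1) count characters whose degrees are forced into $\{2,q,2q\}$, and the analysis naturally splits on whether $a=0$ or not.

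First I would handle the case $a=0$. Substituting into (3.1) gives $2q^3-q=q^2(b+4c)$, i.e.\ $2q^2-1=q(b+4c)$. Reducing mod $q$ yields $-1\equiv 0\pmod q$, an immediate contradiction.

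For the case $a\ge 1$, I would fix an irreducible character $\chi$ of degree $2$. Lemma~\ref{lem1}(1) shows that $|G[\chi]|$ divides $(\deg\chi)^2=4$, while $G[\chi]\subseteq G(H)$ forces $|G[\chi]|$ to divide $|G(H)|=q$. Since $q\ge 5$ is odd, $\gcd(q,4)=1$ and hence $|G[\chi]|=1$. Formula (2.1) then reads
$$\chi\chi^*=\varepsilon+\sum_{\alpha\in\widehat H,\,\deg\alpha>1}m(\alpha,\chi\chi^*)\alpha,$$
and taking degrees gives $\sum_{\alpha}m(\alpha,\chi\chi^*)\deg\alpha=3$. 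Each $\deg\alpha$ lies in $\{2,q,2q\}$, and since $q\ge 5>3$ no character of degree $q$ or $2q$ can appear in this sum. Thus only degree-$2$ characters contribute, making the left-hand side an even number, in contradiction with $3$.

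The main obstacle (mild as it is) is really the case split: dimension counting mod $q$, or equivalently Lemma~\ref{lem1}(2) which only says $q\mid 4a$ and hence $q\mid a$, handles $a=0$ cleanly but leaves the case $a\ge 1$ open. One genuinely needs the multiplicative information carried by $\chi\chi^*$, together with the assumption $q\ge 5$ to eliminate the high-degree summands, in order to derive the parity contradiction.
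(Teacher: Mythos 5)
Your proposal is correct and follows essentially the same route as the paper: both use Lemma~\ref{lem1}(1) with $\gcd(q,4)=1$ to get $G[\chi]=\{1\}$ for a degree-$2$ character, derive a contradiction from the decomposition (2.1) because the remaining degree $3$ cannot be covered by characters of degree $2$, $q$, or $2q$, and then rule out $a=0$ by reducing equation (3.1) modulo $q$ (equivalently $q^2$). The only difference is cosmetic: you organize the argument as a case split on $a$, while the paper first eliminates degree-$2$ characters and then dispatches the resulting equation.
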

\begin{proof}
Suppose on the contrary that $|G(H)|=q$. Let $\chi$ be an
irreducible character of  degree $2$. By Lemma \ref{lem1} (1) and
the fact that $G[\chi]$ is a subgroup of $G(H)$, we know that
$G[\chi]=\{1\}$ is trivial. It follows that the decomposition of
$\chi\chi^*$  as (2.1) gives rise to a contradiction, since $H$ does
not have irreducible characters of degree $3$. Therefore, $a=0$ and
equation (3.1) is $2q^3=q+q^2b+4q^2c$, which is impossible.
\end{proof}

\begin{lem}\label{lem8}
If $|G(H)|=q^2$ then $a=0$ and $b\neq 0$. If $|G(H)|=q^3$ then $H$
is of type $(1,q^3;q,q)$ as a coalgebra.
\end{lem}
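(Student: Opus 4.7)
The plan is to analyse each case by combining equation $(3.1)$, the Frobenius-type constraint (so the dimensions of simple $H$-comodules lie in $\{1,2,q,2q\}$), and the divisibility statements of Lemma \ref{lem1}. The key uniform observation is that since $q\geq 5$, $\gcd(4,|G(H)|)=1$ in both cases; hence for any degree-$2$ irreducible character $\chi$, the subgroup $G[\chi]\subseteq G(H)$ has order dividing $(\deg\chi)^{2}=4$ and is therefore trivial. Then identity $(2.1)$ forces
\[
\chi\chi^{*}=1+\sum_{\deg\alpha>1}m(\alpha,\chi\chi^{*})\alpha,
\]
so the degrees on the right must sum to $3$. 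Because the only available non-trivial degrees are $2,q,2q$ with $q\geq 5$, no such decomposition exists, exactly as in Lemma \ref{lem7}. Consequently $a=0$ in both parts.

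For part (i), substituting $|G(H)|=q^{2}$ and $a=0$ into $(3.1)$ reduces it to $2q-1=b+4c$; if $b=0$ then $4c=2q-1$ is odd, a contradiction, so $b\neq 0$.

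For part (ii), substituting $|G(H)|=q^{3}$ and $a=0$ gives $q=b+4c$. To force $c=0$, I would apply Lemma \ref{lem1}(2) to the degree-$2q$ characters: if $c>0$ then $q^{3}$ divides $c\cdot(2q)^{2}=4q^{2}c$, so $q\mid 4c$, and hence $q\mid c$ as $q\geq 5$ is odd. This yields $c\geq q$, contradicting $c\leq q/4$ from $q=b+4c$. Therefore $c=0$ and $b=q$, which is precisely the coalgebra type $(1,q^{3};q,q)$.

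The main obstacle is the first step, namely eliminating degree-$2$ comodules: the straightforward divisibility $|G(H)|\mid 4a$ from Lemma \ref{lem1}(2) does not by itself force $a=0$ in either case, and one genuinely needs the multiplicative identity $(2.1)$ combined with the Frobenius-type restriction on degrees. Once $a=0$ is in hand, the two stated conclusions fall out from short parity and divisibility computations.
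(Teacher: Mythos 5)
Your proof is correct and follows essentially the same route as the paper: the paper also gets $a=0$ in the $|G(H)|=q^2$ case by the Lemma~\ref{lem7}-style argument (trivial $G[\chi]$ plus the decomposition (2.1)), deduces $b\neq 0$ from the resulting equation, and uses Lemma~\ref{lem1}(2) against (3.1) to kill the degree-$2q$ (and degree-$2$) characters when $|G(H)|=q^3$. One small inaccuracy in your closing remark: for $|G(H)|=q^3$ the divisibility $q^3\mid 4a$ from Lemma~\ref{lem1}(2) together with (3.1) \emph{does} force $a=0$ (this is exactly what the paper does there), though your character-theoretic argument is needed, and suffices, for the $q^2$ case.
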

\begin{proof}
If $|G(H)|=q^2$ then a similar argument as in Lemma \ref{lem7} shows
that $a=0$. Therefore, equation (3.1) is $2q^3=q^2+q^2b+4q^2c$.
Obviously, $b\neq0$, otherwise a contradiction will occur.

If $|G(H)|=q^3$ then Lemma \ref{lem1} (2) shows that $a=c=0$.
\end{proof}

\begin{lem}\label{lem9}
If $2$ divides both $|G(H)|$ and $|G(H^*)|$ then

(1)\, $H=R\#kC_2$ is a biproduct.

Further, if $kC_2$ is normal in $H$ then

(2)\, $H$ is self-dual, or

(3)\, $H$ fits into an abelian central extension $$k\to kC_2\to H\to
R\to k,$$ and ${\rm dim}V\leq2$ for all irreducible $H$-comodule
$V$.
\end{lem}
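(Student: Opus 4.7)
The plan is as follows. For part (1), I would apply Lemma \ref{lem6} directly with $p = 2$: since $\dim H = 2q^3$ and $2$ divides both $|G(H)|$ and $|G(H^{*})|$, this yields $H \cong R\#kC_2$ with $R$ a semisimple Yetter--Drinfeld Hopf algebra of dimension $q^3$ in ${}^{kC_2}_{kC_2}\mathcal{YD}$. Next, assume $kC_2$ is normal in $H$. Since $|C_2| = 2$ is the least prime dividing $\dim H$ and $\mathrm{Aut}(C_2) = 1$, the adjoint action of $H$ on $kC_2$ is forced to be trivial; alternatively, one can apply Lemma \ref{lem2} to the conormal surjection $H^{*} \twoheadrightarrow (kC_2)^{*}$ dual to the inclusion $kC_2 \hookrightarrow H$. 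Either route gives $kC_2 \subseteq Z(H)$, and since $kC_2$ is normal we have (as noted in Section \ref{sec2-3}) an ordinary Hopf algebra $R$ of dimension $q^3$ fitting into the central exact sequence
\begin{equation*}
k \to kC_2 \to H \to R \to k.
\end{equation*}

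I would then split on whether $R$ is cocommutative. If it is, Cartier--Kostant gives $R \cong kF$ for a group $F$ of order $q^3$, so the central extension is abelian; dualizing produces $k \to k^F \to H^{*} \to kC_2 \to k$, and the standard Clifford-theoretic description (Kac--Masuoka) of simple modules over abelian extensions assigns each irreducible $H^{*}$-module dimension $[C_2 : L] \in \{1,2\}$, where $L \subseteq C_2$ is the stabilizer of an associated character of $F$. This forces every irreducible $H$-comodule to have dimension at most $2$, establishing (3). If instead $R$ is not cocommutative, I would aim to prove $H \cong H^{*}$. Using $H \cong R\#kC_2$ together with $H^{*} \cong R^{*}\#kC_2$ from Section \ref{sec2-3}, and the fact that centrality of $kC_2$ forces its action on $R$ to be trivial, the biproduct data reduces to a $\mathbb{Z}/2$-grading on $R$ (and dually on $R^{*}$) as a Hopf algebra. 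Matching these gradings should yield a Hopf algebra isomorphism $H \cong H^{*}$, giving (2).

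The hard part will be the non-cocommutative case: turning the compatibility of the $\mathbb{Z}/2$-gradings on $R$ and $R^{*}$ into an actual Hopf algebra isomorphism $H \cong H^{*}$. I expect this will require either an appeal to the classification of semisimple Hopf algebras of dimension $q^3$ (Masuoka) or a careful direct construction exploiting the trivial $kC_2$-action and the explicit coaction data of the biproduct. By contrast, the cocommutative case is essentially standard Clifford theory for abelian extensions, and the passage from normality to centrality of $kC_2$ is forced by $\mathrm{Aut}(C_2)$ being trivial.
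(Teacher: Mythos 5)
Your plan for part (1) and your division into ``$R$ a group algebra'' versus ``$R$ non-trivial'' essentially reproduces the paper's argument, but your dichotomy on whether $R$ is cocommutative leaves a genuine gap: the case $R\cong k^F$ with $F$ a \emph{non-abelian} group of order $q^3$. Such an $R$ is not cocommutative, so it falls into your second branch, where you intend to prove $H\cong H^*$ from self-duality of $R$; but $k^F$ with $F$ non-abelian is a trivial Hopf algebra, so Masuoka's self-duality theorem (which concerns the non-trivial Hopf algebras of dimension $q^3$) does not apply, and $k^F\not\cong kF$ is not self-dual. Neither conclusion of the lemma is then available: conclusion (2) fails for lack of any reason for $H$ to be self-dual, and conclusion (3) fails because the extension $k\to kC_2\to H\to k^F\to k$ is not abelian ($k^F$ is not cocommutative). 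The paper closes this case by showing it cannot occur: if $R$ is a dual group algebra then dualizing the exact sequence embeds $R^*\cong kF$ as a Hopf subalgebra of $H^*$, so $F\subseteq G(H^*)$ and Nichols--Zoeller \cite{Nichols2} forces $q^3$ to divide $|G(H^*)|$; this is impossible because $|G(H^*)|$ is even, divides $2q^3$, and is not $2q^3$ since $H$ is non-trivial. You need this exclusion (or an equivalent one) before your two branches are exhaustive.

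Two smaller points. First, the assertion that $\mathrm{Aut}(C_2)=1$ forces the adjoint action of $H$ on $kC_2$ to be trivial is not a proof: for a general $h\in H$ the map $a\mapsto h_1aS(h_2)$ is not a Hopf algebra automorphism of $kC_2$, so the group-theoretic argument does not transfer; your alternative route via Lemma \ref{lem2} applied to the conormal quotient $H^*\to k^{C_2}$ is the correct one, and centrality in case (3) is what the paper extracts from \cite[Proposition 4.6.1]{Natale4}. Second, in the group-algebra case the paper obtains $\dim V\le 2$ more directly than your Kac--Masuoka argument, by applying Frobenius reciprocity to the commutative Hopf subalgebra $R^*\subseteq H^*$ of index $2$; and in the non-trivial case it simply combines $H^*\cong R^*\#k^{C_2}$ from \cite[Proposition 1.6]{Somm} with self-duality of $R$ from \cite{Masuoka2}, which is the rigorous form of your proposed ``matching of $\mathbb{Z}/2$-gradings.''
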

\begin{proof}
The first assertion follows from Lemma \ref{lem6}.

Since $R\cong H/H(kC_2)^+$ as a coalgebra and $kC_2$ is normal in
$H$, we have that $R$ is an ordinary Hopf algebra.

First, if $R$ is a dual group algebra then $R^*\subseteq kG(H^*)$.
It contradicts  Nichols-Zoeller Theorem \cite{Nichols2} since ${\rm
dim}R^*=q^3$ does not divide $|G(H^*)|$.

Second, if $R$ is a group algebra then $R^*\subseteq H^*$ is
commutative and the index $[H^*:R^*]=2$. Then it follows from the
Frobenius Reciprocity \cite[Corollary 3.9]{Andrus} that ${\rm
dim}V\leq2$ for all irreducible $H^*$-module $V$. In this case, $H$
fits into an abelian central extension as above by \cite[Proposition
4.6.1]{Natale4}.

Finally, if $R$ is not trivial then it is self-dual \cite{Masuoka2}.
Hence, $H^*\cong R^*\#k^{C_2}\cong R\#kC_2=H$ by \cite[Proposition
1.6]{Somm} (see also Subsection \ref{sec2-3}).
\end{proof}

\begin{lem}\label{lem10}
If $|G(H)|=q^2$ or $q^3$ and $H^*$ has a Hopf subalgebra $K$ of
dimension $2q^2$ then $H$ fits into an extension
$$k\to k\langle g\rangle\to H\to
K^*\to k,$$ where $g\in G(H)$ is of order $q$. Further, if $K$ is
commutative then the extension is abelian.
\end{lem}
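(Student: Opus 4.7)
The plan is to dualize the inclusion $K\hookrightarrow H^{*}$ to a Hopf surjection $\pi\colon H\twoheadrightarrow K^{*}$ and to identify both the left and right $\pi$-coinvariants inside $H$ with a single cyclic group algebra of order $q$. From the dimension formula recalled in Subsection~\ref{sec2-2}, one has $\dim H^{co\pi}=\dim{}^{co\pi}\!H=2q^{3}/2q^{2}=q$. The target is therefore to produce $g\in G(H)$ of order $q$ with $\pi(g)=1$; once this is done, every power $g^{k}$ satisfies $\pi(g^{k})=1$ and hence lies in both coinvariant subalgebras, so $k\langle g\rangle\subseteq H^{co\pi}\cap{}^{co\pi}\!H$, and a dimension count forces $k\langle g\rangle=H^{co\pi}={}^{co\pi}\!H$. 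By the characterization of normality recalled in Subsection~\ref{sec2-2}, this equality is exactly what is needed to make $k\langle g\rangle$ a normal Hopf subalgebra of $H$ and thereby to exhibit the desired extension $k\to k\langle g\rangle\to H\to K^{*}\to k$.

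To produce $g$, I would restrict $\pi$ to a group homomorphism $\pi|_{G(H)}\colon G(H)\to G(K^{*})$. Since $|G(K^{*})|$ divides $\dim K^{*}=2q^{2}$, the image, being a $q$-subgroup, has order at most $q^{2}$. When $|G(H)|=q^{3}$, the kernel of $\pi|_{G(H)}$ has order at least $q$, and Cauchy's theorem inside the $q$-group $G(H)$ produces the required $g$; the same counting handles $|G(H)|=q^{2}$ whenever $\pi|_{G(H)}$ has non-trivial kernel. The main obstacle is the remaining subcase $|G(H)|=q^{2}$ with $\pi|_{G(H)}$ injective, in which $kG(H)$ would embed as a Hopf subalgebra of index $2$ in $K^{*}$; to rule this out I would combine the coalgebra constraints from Lemma~\ref{lem8} (namely $a=0$ and $b\neq 0$) with Nichols--Zoeller applied to $K$ inside $H^{*}$ and, if needed, Corollary~\ref{cor1} applied to $D(H)$.

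For the abelian refinement, if $K$ is commutative then $K^{*}$ is cocommutative and semisimple, hence $K^{*}\cong kF$ for some finite group $F$ by the Cartier--Kostant theorem. Since $C_{q}$ is abelian, $k\langle g\rangle=kC_{q}\cong k^{C_{q}}$, so the extension takes the abelian form $k\to k^{C_{q}}\to H\to kF\to k$.
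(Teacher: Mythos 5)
Your overall skeleton (dualize to $\pi\colon H\to K^*$, get $\dim H^{co\pi}=\dim{}^{co\pi}\!H=q$, find a group-like $g$ of order $q$ killed by $\pi$, conclude $k\langle g\rangle=H^{co\pi}={}^{co\pi}\!H$ and hence normality) is sound, and the abelian refinement at the end is fine. But the way you try to produce $g$ --- counting the kernel of the group homomorphism $\pi|_{G(H)}\colon G(H)\to G(K^*)$ --- leaves a genuine gap that you yourself flag: when $|G(H)|=q^2$ and $\pi|_{G(H)}$ is injective, your argument produces nothing, and the tools you list to "rule this out" (Nichols--Zoeller for $K\subseteq H^*$, Corollary \ref{cor1} for $D(H)$) do not obviously do so; there is no evident contradiction in having $G(H)$ of order $q^2$ embed into $G(K^*)$, whose order may be $q^2$ or $2q^2$. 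As written, one of the two cases in the hypothesis is not proved.

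The paper closes exactly this hole by arguing on $H^{co\pi}$ directly as a left coideal of $H$, using the coalgebra-type information you mention but do not exploit. By Lemma \ref{lem8}, when $|G(H)|=q^2$ or $q^3$ every simple left coideal of $H$ has dimension $1$, $q$ or $2q$. Since $H^{co\pi}$ is a left coideal of dimension $q$ that properly contains the simple coideal $k1$, its decomposition into simple coideals can only involve one-dimensional pieces; hence $H^{co\pi}$ is spanned by $q$ group-like elements, and being a subalgebra these form a subgroup of order $q$, necessarily cyclic, so $H^{co\pi}=k\langle g\rangle$ with $g$ of order $q$ (Lemma \ref{lem3} is used to pin down the dimension count). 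Note that this argument makes your entire kernel-counting step unnecessary: it handles $|G(H)|=q^2$ and $q^3$ uniformly and, as a by-product, shows a posteriori that $\pi|_{G(H)}$ could never have been injective. I would replace your second paragraph with this coideal decomposition; the rest of your write-up can stand.
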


\begin{proof}
Considering the map $\pi:H\to K^*$ obtained by transposing the
inclusion $K\subseteq H^*$, we have that ${\rm dim}H^{co\pi}=q$ by
the discussion in Subsection \ref{sec2-2}. Notice that, in our case,
the dimension of every left coideal of $H$ is $1,q$ or $2q$ by Lemma
\ref{lem8}. Therefore, the left coideals contained in $H^{co\pi}$
are all of dimension $1$. If there exists $1\neq g\in G(H)$ such
that $g\in H^{co\pi}$ then $k\langle g\rangle\subseteq H^{co\pi}$
since $H^{co\pi}$ is an algebra. It follows from Lemma \ref{lem3}
that, as a left coideal of $H$, $H^{co\pi}$ decomposes in the form
$H^{co\pi}=k\langle g\rangle$, where $g\in G(H)$ is of order $q$.
Thus $H^{co\pi}$ is normal in $H$, and hence $H$ fits into an
extension as above. The second assertion is obvious.
\end{proof}

\begin{lem}\label{lem11}
If $|G(H)|=2$ then

(1)\, $H$ is commutative, or

(2)\,$H$ contains a Hopf subalgebra $K\subseteq H$ such that $K\cong
k^F$, where $F$ is a non-abelian group of order $2q^2$. Furthermore,
the dimension of an irreducible $H$-module is at most $q$.
\end{lem}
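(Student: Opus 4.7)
The approach is to construct a Hopf subalgebra $\mathcal{A} \subseteq H$ from $G(H)$ together with the degree-$2$ irreducible characters, pin down its dimension using Nichols-Zoeller together with a modular analysis of (3.1), and then treat the two cases that arise. Write $G(H) = \langle g\rangle$ and let $X_2$ denote the set of irreducible characters of $H$ of degree $2$.

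I would first establish $G[\chi] = G(H)$ for every $\chi \in X_2$. By Lemma~\ref{lem1}, $|G[\chi]|$ divides $\gcd((\deg\chi)^2, |G(H)|) = 2$; the case $G[\chi] = \{1\}$ is ruled out because (2.1) would express $\chi\chi^* - 1$ as a sum of non-grouplike irreducibles of total degree $3$, impossible since the admissible higher degrees $\{2, q, 2q\}$ with $q \ge 5$ cannot sum to $3$. Hence $g\chi = \chi$, and applying the same argument to $\chi^*$ yields $\chi g = \chi$. Since any product of characters of degree at most $2$ has total degree at most $4 < q$ and must decompose accordingly into irreducibles of degree $1$ or $2$, the span of $\{1, g\} \cup X_2$ in $R(H^*)$ is a $*$-invariant standard subalgebra, corresponding via the Nichols correspondence to a Hopf subalgebra $\mathcal{A} \subseteq H$ of dimension $2 + 4a$. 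Reducing (3.1) modulo $q^2$ gives $4a \equiv -2 \pmod{q^2}$, hence $2a + 1 \equiv 0 \pmod{q^2}$ (using $q$ odd), so $a \ge (q^2 - 1)/2$; combining with the Nichols-Zoeller divisibility $(1 + 2a) \mid q^3$ pins down $\dim\mathcal{A} \in \{2q^2,\, 2q^3\}$.

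If $\dim\mathcal{A} = 2q^2$, then $\mathcal{A}$ is a semisimple Hopf algebra of dimension $2q^2$ with coalgebra type $(1, 2;\, 2, (q^2 - 1)/2)$, and in particular is not cocommutative. Invoking the classification of semisimple Hopf algebras of dimension $2q^2$ (which are all trivial) forces $\mathcal{A}$ to be commutative, so $\mathcal{A} \cong k^F$ for some non-abelian group $F$ of order $2q^2$; take $K = \mathcal{A}$, and the bound $\dim V \le q$ on irreducible $H$-modules follows from Frobenius reciprocity \cite[Corollary 3.9]{Andrus} since $K$ is commutative of index $q$ in $H$. The main obstacle is the remaining case $\mathcal{A} = H$, in which every irreducible $H$-comodule has dimension at most $2$: if $H$ is commutative we obtain $H \cong k^G$ and conclusion (1), while otherwise one must still locate the required $K$. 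For this last subcase I would split on the parity of $|G(H^*)|$: when $2 \mid |G(H^*)|$, Lemma~\ref{lem9} yields an abelian central extension $k \to kC_2 \to H \to kF \to k$ with $|F| = q^3$, from which $K$ is extracted by dualizing and passing through a suitable subgroup of $F$; when $|G(H^*)|$ is odd, so $|G(H^*)| \in \{q^2, q^3\}$ by the dual of Lemma~\ref{lem7}, applying Lemma~\ref{lem8} to $H^*$ supplies enough structural information on the coalgebra type of $H^*$ to pinpoint $K$ of the required form.
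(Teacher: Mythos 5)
Your skeleton matches the paper's: you form the standard subalgebra spanned by $G(H)\cup X_2$, obtain a non-cocommutative Hopf subalgebra $\mathcal{A}$ of dimension $2+4a$, and use equation (3.1) with Nichols--Zoeller to force $\dim\mathcal{A}\in\{2q^2,2q^3\}$ (your reduction mod $q^2$ is a clean way to exclude $2q$, and the preliminary computation of $G[\chi]$, though not needed for closure of the standard subalgebra, is harmless). But in the case $\dim\mathcal{A}=2q^2$ your stated justification is false: semisimple Hopf algebras of dimension $2q^2$ are \emph{not} all trivial --- the paper itself recalls from \cite{Masuoka0} that there are two nontrivial classes, of algebra types $(1,2q;2,\frac{q(q-1)}{2})$ and $(1,q^2;q,1)$. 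The conclusion $\mathcal{A}\cong k^F$ is still salvageable, since neither nontrivial class has exactly two group-like elements while $\mathcal{A}$ has coalgebra type $(1,2;2,\frac{q^2-1}{2})$; the paper instead quotes \cite[Proposition 6.8]{Bichon}, which asserts that a semisimple Hopf algebra of coalgebra type $(1,2;2,n)$ is commutative. So this is a repairable misstatement rather than a fatal one.

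The genuine gap is the case $\mathcal{A}=H$, i.e.\ $H$ of coalgebra type $(1,2;2,\frac{q^3-1}{2})$. You dispose only of the sub-branch where $H$ happens to be commutative, and your plan for the non-commutative sub-branch does not go through. Lemma \ref{lem9} cannot simply be ``applied'': it requires $2$ to divide $|G(H^*)|$ \emph{and} $kC_2$ to be normal in $H$, and even then it yields three alternatives rather than the abelian central extension you want; moreover, dualizing $k\to kC_2\to H\to kF\to k$ would place a copy of $k^F$ with $|F|=q^3$ inside $H^*$, which is not a Hopf subalgebra $k^{F'}\subseteq H$ with $|F'|=2q^2$, and no mechanism is given for ``passing through a suitable subgroup.'' The appeal to Lemma \ref{lem8} for $H^*$ in the odd case is likewise non-constructive: knowing the coalgebra type of $H^*$ does not produce the required commutative Hopf subalgebra of $H$. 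The missing ingredient is precisely \cite[Proposition 6.8]{Bichon}: a semisimple Hopf algebra all of whose irreducible comodules have dimension at most $2$ and with only two group-like elements is automatically commutative, so in this case alternative (1) holds outright and there is no non-commutative sub-branch to treat. Without that (or an equivalent) input, the case $2+4a=2q^3$ remains unresolved in your argument.
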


\begin{proof}
Observe that $a\neq 0$ in this case, since otherwise equation (3.1)
can not hold. It follows that $G(H)\cup X_2$ spans a standard
subalgebra of $R(H^*)$, which corresponds to a non-cocommutative
Hopf subalgebra $K$ of dimension $2+4a$. Then $2+4a=2q,2q^2$ or
$2q^3$ by Nichols-Zoeller Theorem \cite{Nichols2}. Obviously,
$2+4a\neq 2q$ since otherwise equation (3.1) can not hold.

If $2+4a=2q^3$ then $H$ is of type $(1,2;2,\frac{q^3-1}{2})$ as a
coalgebra. Then $H$ is commutative by \cite[Proposition
6.8]{Bichon}.

If $2+4a=2q^2$ then $K$ is of type $(1,2;2,\frac{q^2-1}{2})$ as a
coalgebra. Therefore, $K$ is  commutative also by \cite[Proposition
6.8]{Bichon}.

Finally, the Frobenius Reciprocity \cite[Corollary 3.9]{Andrus}
shows that ${\rm dim}V\leq q$ for all irreducible $H$-module $V$.
\end{proof}

\begin{lem}\label{lem12}
If $H$ contains a Hopf subalgebra $K\subseteq H$ such that $K\cong
k^F$ and $G(H)\subseteq K$, where $F$ is a non-abelian group of
order $2q^2$. Then $G(D(H)^*)$ can not contain elements like
$g\otimes \eta$, where $g\in G(H)$ and $\eta\in G(H^*)$ are both of
order $2$.
\end{lem}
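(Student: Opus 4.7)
The plan is to argue by contradiction. Suppose there exist $g\in G(H)$ and $\eta\in G(H^*)$, both of order $2$, with $g\otimes\eta\in G(D(H)^*)$. Via Majid's identification ${}_{D(H)}\mathcal{M}\cong{}^{H}_{H}\mathcal{YD}$, together with Lemma~\ref{lem5}, this is equivalent to the Yetter-Drinfeld compatibility
\[
(\eta\rightharpoonup h)\,g \;=\; g\,(h\leftharpoonup\eta)\qquad\text{for all } h\in H.
\]
The hypothesis $G(H)\subseteq K$ places $g$ inside the commutative Hopf subalgebra $K\cong k^F$. Writing the canonical basis $\{e_f:f\in F\}$ of $k^F$ with $\Delta(e_f)=\sum_{xy=f}e_x\otimes e_y$, the restriction $\eta|_K$ is a group-like of $K^*\cong kF$ and hence corresponds to an element $f_\eta\in F$ whose order divides $2$.

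The first step is to evaluate the compatibility on $h=e_f$ for each $f\in F$. Using the commutativity of $K$ and the invertibility of the group-like $g$ in $K$, a short calculation reduces the identity to $f f_\eta^{-1}=f_\eta^{-1} f$ for all $f\in F$, i.e., $f_\eta\in Z(F)$. The second step is to compute $Z(F)$. Since $F$ is non-abelian of order $2q^2$, its Sylow $q$-subgroup $N$ is normal and abelian, so $F=N\rtimes C_2$ with the generator of $C_2$ acting as a non-trivial involution $\sigma$ on $N$. A direct check gives $Z(F)=N^{\sigma}$, whose order is either $1$ or $q$. Since $q\geq 5$ is odd, $|Z(F)|$ is coprime to $2$, and therefore the element $f_\eta$, of order dividing $2$, must be trivial.

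Let $\pi:H^*\twoheadrightarrow K^*\cong kF$ denote the Hopf surjection dual to the inclusion $K\subseteq H$. We have shown $\pi(\eta)=1$, which is equivalent to $\eta\in (H^*)^{co\pi}$. Consequently $k\langle\eta\rangle\subseteq (H^*)^{co\pi}$ is a two-dimensional Hopf subalgebra of $H^*$; since $\dim(H^*)^{co\pi}=\dim H^*/\dim K^*=q$, Lemma~\ref{lem3} then forces $2\mid q$, contradicting $q\geq 5$. The main computational step is the evaluation of the Yetter-Drinfeld identity on the basis $\{e_f\}$ to extract the centrality of $f_\eta$; once this is in hand, the contradiction follows directly from the structure of groups of order $2q^2$ together with Lemma~\ref{lem3}.
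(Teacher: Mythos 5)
Your argument is correct, but it takes a genuinely different route from the paper's. You exploit the explicit Yetter--Drinfeld compatibility of Lemma~\ref{lem5}: evaluating $(\eta\rightharpoonup e_f)\,g=g\,(e_f\leftharpoonup\eta)$ on the dual basis of $K\cong k^F$ gives $e_{ff_\eta^{-1}}\,g=g\,e_{f_\eta^{-1}f}$, and since $g$ lies in the commutative algebra $K$ and is invertible there, this forces $f_\eta\in Z(F)$; for a non-abelian $F$ of order $2q^2$ one has $F=N\rtimes C_2$ with $Z(F)=N^{\sigma}$ of odd order, so the order-$2$ element $f_\eta$ is trivial, hence $\eta\in (H^*)^{co\pi}$ for the surjection $\pi:H^*\to K^*$, and Lemma~\ref{lem3} yields the contradiction $2\mid q$. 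The paper argues instead with the projection $\pi:H\to k^{\langle\eta\rangle}$ dual to $k\langle\eta\rangle\subseteq H^*$: it shows via \cite[Theorem 1.6.4]{Natale4} and a dimension count that $K^{co\pi|_K}=K\cap H^{co\pi}$ is a Hopf subalgebra of $K$ of dimension $q^2$, and then invokes Masuoka's classification in dimension $q^2$ to produce $q^2$ group-like elements in $H$, contradicting the fact that $G(H)\subseteq G(k^F)$ has order $2$ or $2q$. Your version trades those two external inputs (Natale's Theorem 1.6.4 and the $q^2$-classification) for an explicit basis computation plus elementary group theory about $Z(F)$, which makes it more self-contained and arguably more transparent; both proofs ultimately rest on the same two hypotheses, namely the commutativity of $K$ and the containment $G(H)\subseteq K$.
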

\begin{proof}
Suppose on the contrary that there is $g\otimes \eta\in G(D(H)^*)$
such that $g\in G(H)$ and $\eta\in G(H^*)$ are both of order $2$.
Equivalently, there exists a non-trivial one-dimensional
Yetter-Drinfeld module of $H$ of the form $V_{g,\eta}$. See Lemma
\ref{lem5}. Notice that, in our case, the order of $G(H)$ is $2$ or
$2q$.

Consider the projection $\pi:H\to k^{\langle\eta\rangle}$ obtained
by transposing the inclusion $k\langle\eta\rangle\subseteq H^*$.
Since $K$ is commutative and $G(H)\subseteq K$, $g^{-1}ag=a$ for all
$a\in K^{co\pi|_{K}}$, where $K^{co\pi|_{K}}=K\cap H^{co\pi}$.  By
\cite[Theorem 1.6.4]{Natale4}, $K^{co\pi|_{K}}$ is a Hopf subalgebra
of $K$. On the other hand, ${\rm dim}K^{co\pi|_{K}}=2q^2$ or $q^2$
by \cite[Lemma 1.3.4]{Natale4} since ${\rm dim}\pi(K)=1$ or $2$. If
the first case holds true then $K\subseteq H^{co\pi}$, since
dim$K=2q^2$ and $K^{co\pi|_{K}}=K\cap H^{co\pi}$. But this
contradicts Lemma \ref{lem3} since dim$K$ does not divide
dim$H^{co\pi}$. Hence, ${\rm dim}K^{co\pi|_{K}}=q^2$ and
$K^{co\pi|_{K}}$ is a cocommutative Hopf subalgebra of $H$
\cite{Masuoka1}. It is impossible since the order of $G(H)$ is $2$
or $2q$.
\end{proof}

\begin{rmk}\label{rem2}
By \cite[Proposition 9.9]{Etingof2}, $G(D(H)^*)$ is not trivial.
Therefore, if $|G(H)|=2$ and $|G(H^*)|=2,2q$ or $2q^2$ then Lemma
\ref{lem11} and Lemma \ref{lem12} show that $G(D(H)^*)$ contains an
element $g\otimes \eta$ such that $g$ and $\eta$ are of different
order. Then the proof of Corollary \ref{cor1} imply that $H$ or
$H^*$ must contain a non-trivial central group-like element. Hence,
such Hopf algebra is not simple.
\end{rmk}

\begin{lem}\label{lem01}
If $|G(H)|=2q$ then $G(H)$ is cyclic.
\end{lem}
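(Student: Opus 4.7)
My plan is to argue by contradiction. Since $|G(H)|=2q$ and $q\geq 5$ is an odd prime, the only non-cyclic possibility for $G(H)$ is the dihedral group $D_q$; assuming $G(H)\cong D_q$, I note in particular that $Z(G(H))=1$. The overall strategy is to extract, from the degree-$2$ irreducible characters of $H$, a Hopf subalgebra $K\subseteq H$ whose dimension is severely restricted, and then to contradict the non-abelian structure of $G(H)$ sitting inside $K$.

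The first step is the analysis of $X_2$. For any $\chi\in X_2$, Lemma~\ref{lem1}(1) together with $G[\chi]\leq G(H)$ gives $|G[\chi]|\mid\gcd(4,2q)=2$; combining (2.1) (the decomposition of $\chi\chi^*$, whose total degree is $4$) with the absence of irreducibles of degree $3$ (the possible degrees being $1,2,q,2q$ with $q\geq 5$) rules out $|G[\chi]|=1$, so $|G[\chi]|=2$ and $\chi\chi^*=1+s_\chi+\alpha_\chi$ for some order-$2$ element $s_\chi\in G(H)$ and some $\alpha_\chi\in X_2$. In particular $a>0$, and the left $G(H)$-action on $X_2$ has orbits of size $q$, forcing $q\mid a$. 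Next, $G(H)\cup X_2$ is $*$-stable and closed under multiplication in $R(H^*)$ (a product of two degree-$2$ characters has total degree $4<q$, hence decomposes into characters of degree $\leq 2$), so it spans a standard subalgebra corresponding to a Hopf subalgebra $K\subseteq H$ with $\dim K=2q+4a=2q(1+2a/q)$. Nichols--Zoeller forces $(1+2a/q)\mid q^2$, and together with $a>0$ this leaves only $\dim K\in\{2q^2,2q^3\}$.

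The third step is to rule out both cases. If $\dim K=2q^2$, then $K$ is a semisimple Hopf algebra of dimension $2q^2$ with $G(K)\supseteq G(H)\cong D_q$ non-abelian; invoking the known classification/semisolvability of Hopf algebras of dimension $2q^2$ (which forces $G(K)$ to be abelian, cf.\ the remark following Proposition~\ref{prop1}) yields a contradiction. If $\dim K=2q^3$, then $K=H$ has coalgebra type $(1,2q;\,2,q(q^2-1)/2)$; here I would adapt \cite[Proposition 6.8]{Bichon} (used in Lemma~\ref{lem11} for the case $|G(H)|=2$), exploiting the $G(H)$-equivariant bijection, within each orbit of $X_2$ under left multiplication, between the $\chi$'s and the $q$ reflection subgroups of $D_q$, to conclude that $H$ must be commutative---contradicting $G(H)\cong D_q$.

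The main obstacle is the subcase $\dim K=2q^3$, since \cite[Proposition 6.8]{Bichon} as invoked in Lemma~\ref{lem11} handles only $|G(H)|=2$, and transplanting it to $|G(H)|=2q$ with a non-abelian group of group-likes is not immediate. If this direct adaptation proves too delicate, an alternative route is via the Drinfeld double: by \cite[Proposition 9.9]{Etingof2}, $G(D(H)^*)\neq 1$, and since $Z(G(H))=1$ the Yetter--Drinfeld condition of Lemma~\ref{lem5} applied to $h\in G(H)$ forces any non-trivial element of $G(D(H)^*)$ to be of the form $(1,\eta)$ with $\eta$ a non-trivial central group-like of $H^*$; propagating this central group-like into a normal Hopf subalgebra of $H$ and examining the resulting extension should also yield the required contradiction with the dihedral structure of $G(H)$.
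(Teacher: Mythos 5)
Your overall strategy is genuinely different from the paper's, and unfortunately it is not carried to completion. The paper's proof is short: it first observes that $X_2\neq\varnothing$ and that $q^2$ cannot divide $a$ (both by inspecting equation (3.1)), then notes that $|G[\chi]|=2$ for every $\chi\in X_2$ (the same computation you perform), and at that point simply invokes \cite[Proposition 1.2.6]{Natale4}, which under these hypotheses yields that $G(H)$ is \emph{abelian}; cyclicity then follows from the classification of groups of order $2q$. Your preparatory steps are sound and consistent with what the paper does elsewhere (e.g.\ in Lemma \ref{lem13}): the computation $|G[\chi]|=2$, the orbit argument giving $q\mid a$, and the Nichols--Zoeller restriction $\dim K\in\{2q^2,2q^3\}$ for the Hopf subalgebra $K$ spanned by $G(H)\cup X_2$ are all correct. (Minor point: you should establish $X_2\neq\varnothing$ directly from (3.1) before arguing about elements of $X_2$; as written, ``in particular $a>0$'' does not follow from a statement quantified over a possibly empty set.)

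The genuine gaps are in the two terminal contradictions. In the case $\dim K=2q^2$ you assert that the classification of semisimple Hopf algebras of dimension $2q^2$ ``forces $G(K)$ to be abelian''; this is clear when $K$ is trivial (it must then be $k^F$, whose group-likes form $\widehat{F}$), but for the unique nontrivial Hopf algebra of coalgebra type $(1,2q;2,\frac{q(q-1)}{2})$ you would need to actually verify that its group of group-like elements is abelian, and the reference you point to (the remark on semisolvability in dimension $p^2q^2$) does not say this. More seriously, the case $\dim K=2q^3$ is not closed by either of your proposed routes: adapting \cite[Proposition 6.8]{Bichon} from $|G(H)|=2$ to $|G(H)|=2q$ is exactly the hard step, and you acknowledge you do not know how to do it; and the Drinfeld-double alternative, while its first step is correct (if $Z(G(H))=1$ then every element of $G(D(H)^*)$ has the form $1\otimes\eta$ with $\eta$ a central group-like of $H^*$, by Lemma \ref{lem5} evaluated on group-likes), only produces a nontrivial central group-like in $H^*$ --- that shows $H$ is not simple, but it does not by itself contradict $G(H)\cong D_q$, and you give no argument for how ``examining the resulting extension'' would. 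As it stands the proof is incomplete; the missing ingredient is precisely the content of \cite[Proposition 1.2.6]{Natale4}, which converts the condition $|G[\chi]|=2$ for all $\chi\in X_2$ (together with $q^2\nmid a$) into abelianness of $G(H)$ without any case division on $\dim K$.
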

\begin{proof}
Notice that $X_2\neq \varnothing$ and $q^2$ can not divide $a$.
Indeed, if $X_2=\varnothing$ then equation (3.1) can not hold true,
and if $q^2$ divides $a$ then equation (3.1) is
$$2q^3=2q+4q^2a'+q^2b+4q^2c,$$
for some positive integer $a'$, which reduces to a contradiction
$q(2q-4a'-b-4c)=2$.

In addition, $|G[\chi]|=2$ for all $\chi\in X_2$. Then $G(H)$ is
abelian by \cite[Proposition 1.2.6]{Natale4}. The lemma then follows
from the classification of group of order $2q$ \cite{Burnside}.
\end{proof}

In what follows, we shall prove our main theorem by checking every
possible order of $G(H)$. By \cite[Theorem 1.6]{Etingof2},
$|G(H)|\neq 1$. By Lemma \ref{lem7}, $|G(H)|\neq q$. In addition, if
$|G(H)|= 2q^3$ then $H$ is a group algebra. Therefore, it suffices
to check the possibilities that $|G(H)|=2,q^3,2q^2,q^2$ and $2q$. By
Proposition \ref{prop1}, we shall prove that $H$ is not simple.

We will use two classification results from
\cite{Masuoka0,Masuoka2}: Let $q$ be an odd prime. There are two
classes of nontrivial non-isomorphic semisimple Hopf algebras of
dimension $2q^2$. They are dual to each other and of type
$(1,2q;2,\frac{q(q-1)}{2})$ and $(1,q^2;q,1)$ as algebras,
respectively. All nontrivial non-isomorphic semisimple Hopf algebras
of dimension $q^3$ are self-dual and of the same algebra type
$(1,q^2;q,q-1)$.

\begin{prop}\label{prop2}
If $|G(H)|=2$ then

(1)\, $|G(H^*)|\neq2$.

(2)\, If $|G(H^*)|=2q$ then $H$ or $H^*$ contains a non-trivial
central group-like element.

(3)\, If $|G(H^*)|=q^2$ or $q^3$ then $H$ fits into an abelian
extension $$k\to k^F\to H\to k\langle g\rangle\to k,$$ where $F$ is
a group of order $2q^2$ and $g\in G(H^*)$ is of order $q$.

(4)\,If $|G(H^*)|=2q^2$ then $H$ fits into an abelian extension
$$k\to k^F\to H\to k\langle g\rangle\to k,$$ where $F$ is a group of
order $q^3,2q^2$ or $2q$, and $g\in G(H^*)$ is of order $2,q$ or
$q^2$.
\end{prop}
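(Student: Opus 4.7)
The plan relies on the Hopf subalgebra $K\cong k^F\subseteq H$ (with $F$ non-abelian of order $2q^2$) from Lemma~\ref{lem11}, the Drinfeld-double machinery in Remark~\ref{rem2} and Lemma~\ref{lem12}, a coinvariant dimension count, and the classifications of semisimple Hopf algebras of dimensions $q^3$ and $2q^2$ recalled in the paper.

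For (1), I argue by contradiction. Assuming $|G(H^*)|=2$, Remark~\ref{rem2} furnishes a non-trivial central group-like element $t$, necessarily of order $2$, in $H$ or $H^*$. Quotienting by $k\langle t\rangle$ produces a semisimple Hopf algebra of dimension $q^3$; by the stated classification (trivial, or self-dual of type $(1,q^2;q,q-1)$), the dual of this quotient carries at least $q^2$ group-like elements, which inject into $G(H^*)$ or $G(H)$ accordingly. This forces one of $|G(H^*)|,|G(H)|$ to be at least $q^2$, contradicting the hypothesis. Part (2) is an immediate restatement of Remark~\ref{rem2}.

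For (3), I consider the projection $\pi\colon H^*\twoheadrightarrow K^*\cong kF$ dual to $K\hookrightarrow H$. The left coideal subalgebra $(H^*)^{co\pi}$ has dimension $q$ and contains $1$. Applying Lemma~\ref{lem8} to $H^*$ (using $|G(H^*)|\in\{q^2,q^3\}$) excludes $2$-dimensional simple $H^*$-comodules, so the possible simple constituents of $(H^*)^{co\pi}$ have dimensions from $\{1,q,2q\}$. A direct dimension count, combined with the fact that $k\cdot 1$ is a one-dimensional subcomodule of $(H^*)^{co\pi}$, rules out all but the decomposition into $q$ one-dimensional subcomodules. Hence $(H^*)^{co\pi}$ is spanned by $q$ group-likes forming a cyclic subgroup $\langle g\rangle\leq G(H^*)$ of order $q$, and thus $(H^*)^{co\pi}=k\langle g\rangle$ is a Hopf subalgebra, automatically normal in $H^*$. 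Dualizing the exact sequence $k\to k\langle g\rangle\to H^*\to K^*\to k$ yields the required abelian extension.

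For (4), I extract a non-trivial central group-like of $H^*$ of order in $\{2,q,q^2\}$ via the Drinfeld double. Starting from a non-trivial $g\otimes\eta\in G(D(H)^*)$, which exists by \cite[Proposition 9.9]{Etingof2}, Lemma~\ref{lem12} (applicable since $G(H)\subseteq K$) forbids the configuration $|g|=|\eta|=2$. Running through the remaining possibilities and taking suitable powers of $g\otimes\eta$---in particular $(g\otimes\eta)^{|\eta|/2}$ when $|\eta|\in\{2q,2q^2\}$ to avoid falling back into the forbidden configuration, and $(g\otimes\eta)^{|\eta|}$ when $|\eta|\in\{q,q^2\}$ to project out the $\eta$-component---one obtains either a central group-like $\eta\in G(H^*)$ of order $m\in\{2,q,q^2\}$ or a central group-like $t\in G(H)$ of order $2$. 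In the first case, $k\langle\eta\rangle\subseteq H^*$ is normal and the exact sequence $k\to k\langle\eta\rangle\to H^*\to B\to k$ dualizes to $k\to B^*\to H\to k\langle\eta\rangle^*\to k$ with $\dim B^*=2q^3/m\in\{q^3,2q^2,2q\}$. In the second case one combines $k\langle t\rangle$ with the central group-like of order $q$ supplied by Masuoka's classification of the dim-$q^3$ quotient $\bar H$ (when non-trivial), pulling back to obtain a central Hopf subalgebra of $H$ of dimension $2q$, identified as $k^{D_q}$ via $|G(L)|\leq|G(H)|=2$; this gives the extension with $|F|=2q$, $|g|=q^2$. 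In every case the kernel is identified as $k^F$ by invoking the classification of dim $q^3, 2q^2, 2q$ semisimple Hopf algebras together with the sharp constraint $|G(\text{kernel})|\leq|G(H)|=2$, which rules out the cocommutative candidates.

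The principal obstacle lies in part (4): confirming in each sub-case that the kernel of the resulting exact sequence is genuinely a dual group algebra $k^F$ rather than merely a Hopf subalgebra of the correct dimension, and tracking which sub-case yields which specific value of $|F|$ among $\{q^3,2q^2,2q\}$. The commutativity is forced only after combining the low-dimensional classifications with the rigid upper bound $|G(H)|=2$, and the bookkeeping between the $H$- and $H^*$-sides of the Drinfeld double analysis must be handled carefully.
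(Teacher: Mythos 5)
Parts (1)--(3) of your proposal are essentially sound. In (1) you replace the paper's two-step analysis of $\bar{H}=H/H(kG(H))^{+}$ (first: $\bar H$ is not trivial because of the coalgebra type of $H$; second: it is not Masuoka's nontrivial algebra because it has too few one-dimensional modules) by a uniform count of group-likes in the dual of the quotient; this works, since in every case --- group algebra, dual group algebra, or Masuoka's self-dual Hopf algebra of dimension $q^{3}$ --- the dual of $\bar H$ has at least $q^{2}$ group-like elements, which cannot embed into a group of order $2$. Part (3) simply re-proves Lemma \ref{lem10} by the coinvariant dimension count, which is exactly what the paper cites there.

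The genuine gap is in part (4), in the branch where the Drinfeld-double argument hands you a central group-like $t\in G(H)$ of order $2$ rather than one in $G(H^{*})$. You propose to pass to the dimension-$q^{3}$ quotient $\bar{H}$, extract a central group-like of order $q$ from it via Masuoka's $p^{n}$-theorem, and ``pull it back'' to a normal Hopf subalgebra $L\subseteq H$ of dimension $2q$ containing $k\langle t\rangle$. This step is not justified: the preimage of a Hopf subalgebra under a Hopf surjection is not a Hopf subalgebra (its dimension here would be $q+q^{3}$, not $2q$), and no correspondence theorem for normal Hopf subalgebras along an exact sequence is available in the paper; moreover, even granting such an $L$, your identification of the $q^{2}$-dimensional quotient $H/HL^{+}$ as $k\langle g\rangle$ with $g$ of order $q^{2}$, rather than $k(C_{q}\times C_{q})$, is unargued. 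The correct way to dispose of this branch --- and the one the paper uses --- is to note that it is vacuous: if $H$ had a central group-like of order $2$, then $kG(H)\cong kC_{2}$ would be normal, and Lemma \ref{lem9} would force either $H\cong H^{*}$ (impossible, since $|G(H)|=2\neq 2q^{2}=|G(H^{*})|$) or ${\rm dim}V\leq 2$ for every irreducible $H$-comodule $V$ (impossible, since by Lemma \ref{lem11} the non-triviality of $H$ forces simple comodules of dimension $q$ or $2q$). With that branch removed, your main branch --- central $\eta\in G(H^{*})$, quotient $B=H^{*}/H^{*}(k\langle\eta\rangle)^{+}$, and identification of $B^{*}$ as $k^{F}$ from the classifications in dimensions $q^{3}$, $2q^{2}$, $2q$ together with $|G(B^{*})|\leq|G(H)|=2$ --- does go through and is a legitimate alternative to the paper's route, which instead splits on whether $X_{2}\neq\varnothing$ and invokes \cite[Theorem 6.4]{Bichon} to produce the central group-like.
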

\begin{proof}
Since the aim of our work is to classify non-trivial semisimple Hopf
algebra, we may assume that $H$ contains a commutative Hopf algebra
$k^F\subseteq H$ of dimension $2q^2$ as in Lemma \ref{lem11} and $H$
is of type $(1,2;2,\frac{q^2-1}{2};q,b;2q,c)$ as a coalgebra, where
$b+c\neq0$.

(1)\,Suppose on the contrary that $|G(H^*)|=2$. By Remark
\ref{rem2}, $H$ or $H^*$ has a non-trivial central group-like
element. We may assume that $H$ contains such an element (In our
case, the duality of Lemma \ref{lem11} shows that $H^*$ contains a
commutative Hopf algebra $K\subseteq H^*$ of dimension $2q^2$ and
$H^*$ is of type $(1,2;2,\frac{q^2-1}{2};q,b;2q,c)$ as a coalgebra,
where $b+c\neq0$. Hence, if $H^*$ contains such element, we can do
similarly.). We then consider the quotient Hopf algebra
$\overline{H}=H/H(kG(H))^+$.

First, $\overline{H}$ is not trivial. Indeed, if $\overline{H}$ is a
group algebra then $(\overline{H})^*\subseteq H^*$ is commutative.
Thus ${\rm dim}V\leq[H^*:(\overline{H})^*]=2$ for all irreducible
$H^*$-module $V$ \cite[Corollary 3.9]{Andrus}. It contradicts the
coalgebra type of $H$. In addition, if $\overline{H}$ is a dual
group algebra then $(\overline{H})^*\subseteq kG(H^*)$. It is also
impossible.

Second, $\overline{H}$ is not one constructed in \cite{Masuoka2}.
Indeed, since the modules of $\overline{H}$ are those modules $V$ of
$H$ such that each $x\in kG(H)$ acts as $\varepsilon(x)id_V$ on $V$,
the number of one-dimensional modules of $\overline{H}$ is at most
$2$, since $H$ has only $2$ non-isomorphic simple modules of
dimension $1$. Comparing the results in \cite{Masuoka2}, we get that
$H$ is not the one constructed in \cite{Masuoka2}. All these facts
imply that $|G(H^*)|\neq 2$.

(2)\, It follows from Remark \ref{rem2}.

(3)\, By Lemma \ref{lem10} and \ref{lem11}, $H^*$ fits into an
abelian extension $$k\to k\langle g\rangle\to H^*\to kF\to k,$$
where $F$ is a group of order $2q^2$ and $g\in G(H^*)$ is of order
$q$. The result then follows after dualizing this extension.

(4)\, If $X_2\neq \varnothing$ then $G(H^*)\cup X_2$ spans a
standard subalgebra of $R(H)$, which corresponds to a quotient Hopf
algebra $L$ of dimension $2q^2+4a$. By Nichols-Zoelle Theorem, ${\rm
dim}L=2q^3$. Therefore, $H$ is of type
$(1,2q^2;2,\frac{q^3-q^2}{2})$ as an algebra. Then $H$ or $H^*$ has
a non-trivial central group-like element \cite[Theorem 6.4]{Bichon}.
By Lemma \ref{lem9}, we may assume that $H$ can not contain such an
element since $H$ is not self-dual and there exists simple comodules
of dimension $>2$. We shall examine every possible order of such an
element $g$ in $G(H^*)$.

If the order of $g$ is $2$ then $H^*=R\#k\langle g\rangle$ is a
biproduct by Lemma \ref{lem6}. It follows from Lemma \ref{lem9} that
$H^*$ fits into an abelian extension $$k\to k\langle g\rangle\to
H^*\to R\to k.$$

If the order of $g$ is $q$ we then consider the extension
$$k\to k\langle g\rangle\to H^*\to H^*/H^*(k\langle g\rangle)^+=\overline{H^*}\to k.$$
Since the number of one-dimensional modules of $\overline{H^*}$ is
at most $2$, $\overline{H^*}$ is trivial by comparing the results in
\cite{Masuoka0}. Moreover, $\overline{H^*}$ is not a dual group
algebra since $|G(H)|=2$. It follows that $\overline{H^*}$ is a
group algebra and the extension above is abelian.

If the order of $g$ is $q^2$ we then consider the extension
$$k\to k\langle g\rangle\to H^*\to H^*/H^*(k\langle g\rangle)^+=\overline{H^*}\to k.$$
The classification of semisimple Hopf algebra of dimension $2q$
shows that  $\overline{H^*}$ is trivial \cite{Etingof}. It is
clearly that $\overline{H^*}$ is not a dual group algebra since
$|G(H)|=2$. Hence, $\overline{H^*}$ is a group algebra and the
extension is abelian.

The result then follows after dualizing all these extensions above.

If $X_2= \varnothing$ we then consider the projection $\pi:H^*\to
kF$ obtained by transposing the inclusion $k^F\subseteq H$. Clearly,
${\rm dim}(H^*)^{co\pi}=q$. The decomposition of $(H^*)^{co\pi}$, as
a coideal of $H^*$, shows that $(H^*)^{co\pi}=k\langle g\rangle$ is
a group algebra, where $g\in G(H^*)$ is of order $q$. Therefore $H$
fits into an abelian extension as described
\end{proof}

\begin{lem}\label{lem13}
If $|G(H)|=2q$ then

(1)\, $H$ is of type $(1,2q;2,\frac{q^3-q}{2})$ as a coalgebra, and
$H$ or $H^*$ contains a non-trivial central group-like element, or

(2)\, $H^*$ contains a normal Hopf subalgebra of dimension $q$, or

(3)\, $H$ contains a Hopf subalgebra $K\subseteq H$ such that
$K\cong k^F$, where $F$ is a non-abelian group of order $2q^2$.
Furthermore, the dimension of an irreducible $H$-module is at most
$q$.
\end{lem}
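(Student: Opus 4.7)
The plan is to apply equation (3.1) under $|G(H)|=2q$ to produce a Hopf subalgebra of $H$ spanned by the group-likes and degree-two characters, and then to branch on its dimension using Nichols--Zoeller.

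First I would note that $a \geq 1$, equivalently $X_2 \neq \varnothing$, which is already recorded in the proof of Lemma \ref{lem01}. Then I would verify that $G(H) \cup X_2$ spans a $*$-invariant standard subalgebra of $R(H^*)$: products $g\chi$ with $g \in G(H)$ and $\chi \in X_2$ remain in $X_2$ by degree, $X_2$ is closed under $*$, and for $\chi,\chi' \in X_2$ the product $\chi\chi'$ has total degree $4$, so under the constraint $q \geq 5$ its irreducible constituents can only have degrees in $\{1,2\}$. By the bijection in \cite[Theorem 6]{Nichols}, this produces a Hopf subalgebra $K \subseteq H$ of dimension $2q + 4a$. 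Nichols--Zoeller then forces $q+2a \mid q^3$, and the constraint $a \geq 1$ yields $q+2a \in \{q^2, q^3\}$.

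If $q+2a = q^3$, then $K = H$ is of coalgebra type $(1,2q;2,(q^3-q)/2)$, and an application of Bichon's central group-like criterion, used as in the proof of Proposition \ref{prop2}(4) via \cite[Theorem 6.4]{Bichon}, supplies a non-trivial central group-like element in $H$ or $H^*$; this is conclusion (1). If $q + 2a = q^2$, then $K$ has dimension $2q^2$ and coalgebra type $(1,2q;2,q(q-1)/2)$. By Masuoka's classification \cite{Masuoka0,Masuoka2} of dimension-$2q^2$ semisimple Hopf algebras, $K$ is either trivial or the unique non-trivial Hopf algebra with this coalgebra type. In the trivial case, $K \cong k^F$ with $|F| = 2q^2$; since $|G(K)|=2q$ forces the abelianization of $F$ to have order $2q$, the group $F$ is non-abelian. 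Frobenius reciprocity \cite[Corollary 3.9]{Andrus} applied to the commutative inclusion $K \subseteq H$ of index $q$ then bounds the dimension of every irreducible $H$-module by $q$, giving conclusion (3).

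The main obstacle is the remaining subcase where $K$ is the non-trivial Hopf algebra of dimension $2q^2$. Transposing $K \hookrightarrow H$ yields a Hopf surjection $\pi: H^* \twoheadrightarrow K^*$ with $\dim (H^*)^{co\pi} = q$, and the goal is to upgrade this left coideal subalgebra of prime dimension to a genuine normal Hopf subalgebra of $H^*$, yielding conclusion (2). I would exploit the explicit description of $K^*$ from Masuoka's classification as an abelian extension involving a cyclic factor of order $q$: this supplies a distinguished group-like element $g$ of order $q$ in $G(K^*) \subseteq G(H^*)$ whose central nature in $K^*$ should force the Hopf subalgebra $k\langle g\rangle \subseteq H^*$ to be normal. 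Verifying that the dimension-$q$ coinvariant algebra indeed coincides with $k\langle g\rangle$ and is stable under the adjoint action of $H^*$ (equivalently, that left and right coinvariants agree) is the delicate step.
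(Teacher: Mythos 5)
Your construction of the Hopf subalgebra $K$ of dimension $2q+4a$, the reduction to $q+2a\in\{q^2,q^3\}$ via Nichols--Zoeller, the case $q+2a=q^3$ (conclusion (1)), and the subcase where $K$ is trivial (conclusion (3)) all match the paper's proof and are correct. The genuine gap is the remaining subcase, where $K$ is the non-trivial semisimple Hopf algebra of dimension $2q^2$ with coalgebra type $(1,2q;2,\frac{q(q-1)}{2})$: you leave this as a sketch, and the sketch does not go through as stated. First, $K^*$ is a \emph{quotient} of $H^*$, not a Hopf subalgebra, so $G(K^*)\subseteq G(H^*)$ is false in general, and a central group-like of $K^*$ gives you no distinguished element of $H^*$. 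Second, and more seriously, to identify $(H^*)^{co\pi}$ (of dimension $q$) with $k\langle g\rangle$ you must exclude irreducible left coideals of $H^*$ of dimension $2$ inside it: since $q$ is odd, $(H^*)^{co\pi}$ could a priori decompose as $k1$ plus $\frac{q-1}{2}$ two-dimensional coideals, and nothing in your setup controls the coalgebra type of $H^*$. This is precisely why Lemma \ref{lem10} carries the hypothesis $|G(H^*)|\in\{q^2,q^3\}$, which via Lemma \ref{lem8} kills the two-dimensional comodules.

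The paper closes this subcase by a different device, branching on $|G(H^*)|$. When $|G(H^*)|=q^2$ or $q^3$ it applies Lemma \ref{lem10} to get conclusion (2). In all remaining cases $2$ divides both $|G(H)|$ and $|G(H^*)|$, so by Lemma \ref{lem9} $H=H^{co\pi}\#kC_2$ for a projection $\pi:H\to kC_2$; since $K\not\subseteq H^{co\pi}$ by Lemma \ref{lem3}, the restriction $\pi|_K$ is surjective and $K=K^{co\pi|_K}\#kC_2$ is itself a biproduct, whence $2$ divides $|G(K^*)|$. But the non-trivial Hopf algebra of dimension $2q^2$ with this coalgebra type has $|G(K^*)|=q^2$ by Masuoka's classification \cite{Masuoka0}, so $K$ is forced to be trivial after all and conclusion (3) holds. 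Thus the non-trivial-$K$ subcase is never attacked head on; to complete your argument you would need either to reproduce this branching on $|G(H^*)|$ or to supply an independent proof of your ``delicate step,'' which as written is not available.
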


\begin{proof}
Observe that $X_2\neq \varnothing$ and there is a non-cocommutative
Hopf subalgebra $K$ of dimension $2q+4a$, corresponding to the
standard subalgebra of $R(H^*)$ spanned by $G(H)\cup X_2$.

If $2q+4a=2q^3$ then $H=K$ is of type $(1,2q;2,\frac{q^3-q}{2})$ as
a coalgebra. Hence, $H$ or $H^*$ contains a non-trivial central
group-like element \cite[Theorem 6.4]{Bichon}.

If $2q+4a=2q^2$ then $K$ is of type $(1,2q;2,\frac{q(q-1)}{2})$ as a
coalgebra. By Lemma \ref{lem10}, if $|G(H^*)|=q^2$ or $q^3$ then
$H^*$ contains a normal Hopf subalgebra of dimension $q$. In all
other cases, $H=H^{co\pi}\#kC_2$ is a biproduct by Lemma \ref{lem9},
where $\pi:H\to kC_2$ is a projection. Since $K$ is not contained in
$H^{co\pi}$ by Lemma \ref{lem3}, dim$K^{co\pi|_{K}}={\rm dim}(K\cap
H^{co\pi})\neq2q^2$. Hence, dim$\pi(K)=2$ by \cite[Lemma
1.3.4]{Natale4}. Therefore, $\pi|_K:K\to kC_2$ is a surjection.
Moreover, $kC_2$ is contained in $K$. Therefore,
$K=K^{co\pi|_K}\#kC_2$ is also a biproduct. By \cite[Proposition
1.6]{Somm},
$K^*=(K^{co\pi|_K})^*\#k^{C_2}\cong(K^{co\pi|_K})^*\#kC_2$ as a Hopf
algebra. Furthermore, the description of group-like elements of a
biproduct \cite[2.11]{Radford} shows that the order of $G(K^*)$ is
divisible by $2$. All these facts imply that, comparing the results
in \cite{Masuoka0}, $K$ is trivial and hence commutative.

Finally, the Frobenius Reciprocity \cite[Corollary 3.9]{Andrus}
shows that ${\rm dim}V\leq q$ for all irreducible $H$-module $V$.
\end{proof}

\begin{prop}\label{prop3}
If $|G(H)|=q^3$ then

(1)\, $kG(H)$ is a normal Hopf subalgebra of $H$ and $H^*$ has a
central group-like element of order $2$.

(2)\, The order of $G(H^*)$ can not be $q^2$ and $q^3$.
\end{prop}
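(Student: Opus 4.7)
The plan is to establish (1) first and then derive (2) at once by a divisibility count.

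To prove (1), I would start from Lemma~\ref{lem8}, which tells us that $H$ is of coalgebra type $(1,q^3;q,q)$; in particular $K:=kG(H)$ is a Hopf subalgebra of $H$ of dimension $q^3$ and hence of index $[H:K]=2$. Since $2$ is the smallest prime dividing $\dim H = 2q^3$, the standard principle that a Hopf subalgebra of a semisimple Hopf algebra whose index equals the smallest prime factor of the dimension is automatically normal (compare the discussion in \cite[Chapter~1]{Natale4}) would then force $K$ to be normal in $H$. Consequently, the quotient $\bar H := H/HK^+$ is a Hopf algebra of dimension $2$, so $\bar H \cong kC_2$, and the projection $\pi\colon H \to \bar H$ is a conormal quotient whose codomain has dimension equal to the smallest prime factor of $\dim H$. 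Lemma~\ref{lem2} then yields $G(\bar H^*) \subseteq Z(H^*) \cap G(H^*)$; since $\bar H^* \cong k^{C_2}$ has a unique non-trivial group-like element, and that element has order $2$, this delivers the required central group-like element of order $2$ in $H^*$, completing the proof of (1).

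Once (1) is in hand, part (2) is immediate: by (1) we have $2 \mid |G(H^*)|$, while the Nichols--Zoeller theorem forces $|G(H^*)|$ to divide $\dim H^* = 2q^3$. The only even divisors of $2q^3$ are $2,\, 2q,\, 2q^2,\, 2q^3$, none of which equals $q^2$ or $q^3$.

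The sole delicate point is the normality of $kG(H)$ in $H$. If one prefers to avoid invoking the general ``least prime index implies normal'' principle, one can argue directly: consider the action of $G(H)$ by left multiplication on the set $X_q$ of $q$ degree-$q$ irreducible characters of $H$. Since left multiplication by a group-like element sends irreducibles to irreducibles, this is a well-defined action whose stabilizer at $\chi \in X_q$ is $G[\chi]$. By Lemma~\ref{lem1}(1), $|G[\chi]|$ divides $q^2$, while orbit--stabilizer gives $|G(H)|/|G[\chi]| \le |X_q| = q$, forcing $|G[\chi]|=q^2$ for every $\chi \in X_q$; a short further analysis of the standard subalgebra of $R(H^*)$ spanned by $G(H) \cup X_q$ (using \cite[Theorem~6]{Nichols}) then recovers the normality of $K$. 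Either route leads to the same conclusion, after which (1) and (2) follow mechanically from Lemma~\ref{lem2}.
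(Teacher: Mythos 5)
Your main argument is essentially the paper's: it too deduces normality of $kG(H)$ from the fact that its index $2$ is the least prime dividing $\dim H$ (the paper cites the Kobayashi--Masuoka result for this), then applies Lemma~\ref{lem2} to the two-dimensional conormal quotient to obtain the central group-like element of order $2$ in $H^*$, with part (2) following from the resulting parity of $|G(H^*)|$. The alternative orbit-counting sketch you append is not needed and is left incomplete, but your primary route is correct and matches the paper.
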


\begin{proof}
Since the index $[H:kG(H)]=2$ is the smallest prime number dividing
${\rm dim}H$, the main result in \cite{Kobayashi} shows that $kG(H)$
is a normal Hopf algebra of $H$. Then part (1) follows from Lemma
\ref{lem2}. Part (2) is obvious.
\end{proof}

A semisimple Hopf algebra $A$ is called group theoretical if the
category of finite dimensional $A$-modules Rep$A$ is a group
theoretical fusion category. As an immediate consequence of
Proposition \ref{prop3} and \cite[Theorem 1.3.]{Natale2}, we have
the following result.

\begin{cor}
If $|G(H)|=q^3$ and $G(H)$ is abelian then $H$ is group theoretical.
\end{cor}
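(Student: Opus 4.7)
The plan is to combine Proposition~\ref{prop3}(1) with the cited theorem of Natale; the proof is essentially a direct synthesis of these two ingredients. First, by Proposition~\ref{prop3}(1), $kG(H)$ is a normal Hopf subalgebra of $H$. Since the index $[H:kG(H)]=2q^3/q^3=2$, the quotient $\overline{H}=H/H(kG(H))^+$ is a Hopf algebra of dimension $2$, and therefore isomorphic to $kC_2$ (the unique semisimple Hopf algebra of dimension $2$ over $k$). Hence $H$ fits into a short exact sequence of Hopf algebras
$$k\to kG(H)\to H\to kC_2\to k.$$

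Next I would check that this extension is abelian in the sense recalled in Section~\ref{sec2-2}. The standing hypothesis that $G(H)$ is abelian makes $kG(H)$ a commutative Hopf algebra, so $kG(H)\cong k^N$ for an abelian group $N$ of order $q^3$, while $kC_2$ is automatically cocommutative. The extension above therefore rewrites as
$$k\to k^N\to H\to kC_2\to k,$$
which is precisely an abelian extension.

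Finally, I would invoke \cite[Theorem 1.3]{Natale2}, which ensures that any semisimple Hopf algebra arising as an abelian extension of a group algebra by a dual group algebra is group theoretical. Applying this to the extension just displayed yields that $H$ is group theoretical, as desired. There is no genuine obstacle in this argument: the entire proof consists of identifying the correct exact sequence (which is immediate from Proposition~\ref{prop3}(1) together with dimension counting), observing that the commutativity of $G(H)$ forces the extension to be abelian, and quoting one external theorem.
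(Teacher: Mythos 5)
Your proof is correct and follows exactly the route the paper intends: the paper states the corollary as an immediate consequence of Proposition~\ref{prop3} and \cite[Theorem 1.3]{Natale2}, and your argument simply fills in the details (normality of $kG(H)$, the index-$2$ quotient being $kC_2$, commutativity of $kG(H)$ from abelianness of $G(H)$, hence an abelian extension to which Natale's theorem applies). No discrepancies.
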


Let $q$ be a odd prime number. There are five classes of finite
groups of order $q^3$ up to isomorphism: $C_{q^3}$, $C_q\times
C_{q^2}$, $C_q\times C_q\times C_q$, $(C_q\times C_q)\rtimes C_q$
and $C_{q^2}\rtimes C_q$, where $\times$ denotes direct product and
$\rtimes$ denotes semidirect product. The following consequence
proves that $G(H)$ can not be the first one.
\begin{cor}
If $|G(H)|=q^3$ then $G(H)$ is not cyclic and $H$ has a Hopf
subalgebra of dimension $2q^2$.
\end{cor}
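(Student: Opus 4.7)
The plan is to treat the two assertions largely separately, using in both cases the structure supplied by Proposition \ref{prop3}: $kG(H)$ is a normal Hopf subalgebra of $H$, so the quotient $H/H(kG(H))^+$ has dimension $2$ and must be $kC_2$.

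For the non-cyclicity assertion I argue by contradiction. Assume $G(H)=C_{q^3}$. Then $k\to kC_{q^3}\to H\to kC_2\to k$ is an abelian extension, since $kC_{q^3}$ is commutative and $kC_2$ is cocommutative. Dualising gives the abelian extension $k\to kC_2\to H^*\to kC_{q^3}\to k$. I then invoke the standard description (see e.g.\ \cite[Section~1.6]{Natale4}) of the irreducible modules over such a bicrossed product as pairs consisting of a $C_{q^3}$-orbit on the set $\widehat{C_2}$ of characters of $C_2$ together with a projective irreducible representation of the stabilizer. Two elementary inputs suffice: $\operatorname{Aut}(C_2)$ is trivial, so the $C_{q^3}$-action on $\widehat{C_2}$ is trivial and every stabilizer equals $C_{q^3}$; and the Schur multiplier $H^2(C_{q^3},k^\times)$ vanishes, so every projective representation of $C_{q^3}$ is ordinary. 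Consequently $H^*$ would admit exactly $2q^3$ irreducible modules, all one-dimensional, making $H$ of coalgebra type $(1,2q^3)$. This contradicts Lemma \ref{lem8}, which asserts that $H$ is of coalgebra type $(1,q^3;q,q)$. Hence $G(H)$ cannot be cyclic.

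For the Hopf subalgebra of dimension $2q^2$, I construct a $*$-invariant standard subalgebra $S$ of $R(H^*)$ of the required dimension and apply the bijection of \cite[Theorem~6]{Nichols}. The set $X_q$ has odd cardinality $q$ and is preserved by the antipode-induced involution $\chi\mapsto\chi^*$, so it contains a self-dual character $\chi$. For any $\chi\in X_q$, the orbit of $\chi$ under the left action of $G(H)$ sits inside $X_q$ and therefore has at most $q$ elements; combined with Lemma \ref{lem1}(1) and the orbit-stabilizer equality $|G(H)|=|G[\chi]|\cdot|\mathrm{orbit}|$, this forces $|G[\chi]|=q^2$. For self-dual $\chi$ one also has $G_r[\chi]=G[\chi^*]=G[\chi]$. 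Set $S=G[\chi]\cup\{\chi\}$. Closure of $S$ under multiplication is immediate: $G[\chi]$ is a group; $G[\chi]\cdot\chi=\chi$ and $\chi\cdot G[\chi]=\chi$ by the definitions of $G[\chi]$ and $G_r[\chi]$; and $\chi\chi^*=\sum_{g\in G[\chi]}g$, since equation $(2.1)$ together with $|G[\chi]|=q^2=(\deg\chi)^2$ forces all higher-degree coefficients to vanish. The set $S$ is patently $*$-closed, and the corresponding Hopf subalgebra of $H$ has dimension $|G[\chi]|+(\deg\chi)^2=2q^2$.

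The principal obstacle lies in the first part: the correct identification and analysis of the abelian extension $k\to kC_2\to H^*\to kC_{q^3}\to k$, resting on the Clifford-type classification of irreducible representations over abelian Hopf-algebra extensions together with the two ingredients $\operatorname{Aut}(C_2)=1$ and $H^2(C_{q^3},k^\times)=0$. The second part is by contrast an essentially formal character-theoretic computation, the only non-trivial observation being the existence of a self-dual $q$-dimensional irreducible character.
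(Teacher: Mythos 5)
Your proof is correct, and its second half (the construction of the Hopf subalgebra of dimension $2q^2$) coincides with the paper's argument: both establish $|G[\chi]|=q^2$ via the orbit count of $G(H)$ acting on $X_q$ (using $|X_q|=q$ from Lemma \ref{lem8}), extract a self-dual $\chi\in X_q$ from the odd cardinality of $X_q$, and verify that $G[\chi]\cup\{\chi\}$ spans a $*$-invariant standard subalgebra, whence a Hopf subalgebra of dimension $q^2+q^2$. The first half, however, takes a genuinely different route. The paper stays inside character theory: it invokes Masuoka's result that the exponent of $G[\chi]$ divides $\deg\chi=q$, so the subgroup $G[\chi]$ of order $q^2$ is elementary abelian, hence not cyclic, hence $G(H)$ is not cyclic. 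You instead exploit the normality of $kG(H)$ from Proposition \ref{prop3}: assuming $G(H)=C_{q^3}$, you dualize the resulting abelian extension and apply the Clifford-type parametrization of irreducibles of the bicrossed product $k^{C_2}\#_\sigma kC_{q^3}$; the trivial action together with the vanishing Schur multiplier of a cyclic group forces every irreducible $H^*$-module to be one-dimensional, so $H^*$ is commutative, contradicting the coalgebra type $(1,q^3;q,q)$ of Lemma \ref{lem8}. One small repair: the action of $C_{q^3}$ on the two-point set is trivial not because $\operatorname{Aut}(C_2)$ is trivial (matched-pair actions need not be by automorphisms) but because any permutation action fixing the identity of a two-element set is trivial; the conclusion is unaffected. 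Your route costs the machinery of abelian extensions and the representation theory of bicrossed products but avoids Masuoka's exponent theorem; the paper's route is shorter and yields the sharper structural fact that $G[\chi]\cong C_q\times C_q$. Both are sound.
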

\begin{proof}
The group $G(H)$ acts by left multiplication on the set $X_q$. The
set $X_q$ is a union of orbits which have length $1,q,q^2$ or $q^3$.
Since $|X_q|=q$ and the order of stabilizer $G[\chi]$ is at most
$q^2$ for all $\chi\in X_q$, there is only one orbit which has
length $q$. That is, $G[\chi]$ is of order $q^2$ for all $\chi\in
X_q$.

Let $\chi\in X_q$. It follows from the results in \cite[Section
2]{Masuoka3} that the exponent of $G[\chi]$ divides $deg\chi$.
Hence, if $G[\chi]$ is cyclic then $q^2$ divides $q$, a
contradiction. Thus $G(H)$ is not cyclic.

Since $|X_q|=q$ is odd, there is an irreducible character $\chi$ of
degree $q$ which is self-dual. Hence, $\{\chi\}\cup G[\chi]$ spans a
standard subalgebra of $R(H^*)$, which corresponds to a Hopf
subalgebra of dimension $2q^2$.
\end{proof}

\begin{prop}\label{prop4}
If $|G(H)|=2q^2$ then $H$ is not simple.
\end{prop}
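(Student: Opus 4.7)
The plan is to proceed by case analysis on $|G(H^*)|$, which divides $\dim H=2q^3$ by the Nichols--Zoeller theorem. The easier values dispose of quickly: $|G(H^*)|=1$ contradicts \cite[Theorem 1.6]{Etingof2}; $|G(H^*)|=q$ contradicts Lemma \ref{lem7} applied to $H^*$; $|G(H^*)|=2q^3$ makes $H$ a dual group algebra; $|G(H^*)|=q^3$ is handled by Proposition \ref{prop3} applied to $H^*$; and $|G(H^*)|=q^2$ is handled by Lemma \ref{lem10} with $H$ and $H^*$ interchanged, since $K=kG(H)\subseteq H$ is a Hopf subalgebra of dimension $2q^2$, producing an extension $k\to k\langle g\rangle\to H^*\to(kG(H))^*\to k$ with $g\in G(H^*)$ of order $q$. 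In every such case $H^*$ (hence $H$) is not simple.

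The remaining cases are $|G(H^*)|\in\{2,2q,2q^2\}$. In each, $2$ divides both $|G(H)|$ and $|G(H^*)|$, so Lemma \ref{lem9}(1) gives a biproduct decomposition $H\cong R\#kC_2$ where $R$ is a semisimple Yetter--Drinfeld Hopf algebra of dimension $q^3$ in ${}^{kC_2}_{kC_2}\mathcal{YD}$. For $|G(H^*)|=2$ I would apply Proposition \ref{prop2}(4) to $H^*$, since $|G((H^*)^*)|=|G(H)|=2q^2$ meets its hypothesis. For $|G(H^*)|=2q$ I would apply Lemma \ref{lem13} to $H^*$; its first two subcases yield directly either a non-trivial central group-like in $H$ or $H^*$, or a normal Hopf subalgebra of dimension $q$, while for the third subcase ($H^*$ containing a non-abelian $k^F$ of order $2q^2$) I would combine the biproduct structure with Lemma \ref{lem12} applied to $H^*$ to finish.

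The main obstacle is $|G(H^*)|=2q^2$, where both $kG(H)\subseteq H$ and $kG(H^*)\subseteq H^*$ are commutative Hopf subalgebras of index $q$. By Frobenius reciprocity \cite[Corollary 3.9]{Andrus} every irreducible module over $H$ or $H^*$ then has dimension at most $q$, so $c=0$ in equation (3.1). Using Lemma \ref{lem1}(2) to deduce $q^2\mid a$ and $2\mid b$, together with (2.1) applied to $\chi\in X_2$ (which forces $|G[\chi]|=2$), I would reduce to a short list of possible coalgebra types $(1,2q^2;2,a;q,b)$. From there the plan is to produce a self-dual $\chi\in X_q$ with $|G[\chi]|=q^2$: the set $G[\chi]\cup\{\chi\}$ then spans a standard subalgebra corresponding to a second Hopf subalgebra of $H$ of dimension $2q^2$, distinct from $kG(H)$, and these two generate a Hopf subalgebra whose dimension divides $2q^3$ and strictly exceeds $2q^2$; the only such divisors are $q^3$ and $2q^3$, and if it equals $q^3$ the subalgebra has index $2$ and is automatically normal by Kobayashi's theorem \cite{Kobayashi}. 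The delicate points will be establishing the existence of such a self-dual $\chi$ and excluding the possibility that the generated subalgebra is all of $H$; should that exclusion fail, I would dualize and run the parallel argument on $H^*$, exploiting the symmetry $|G(H)|=|G(H^*)|$ in this case.
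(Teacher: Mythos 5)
Your reduction of the easy cases ($|G(H^*)|=1,q,q^2,q^3,2q^3$, and $|G(H^*)|=2$ via Proposition \ref{prop2}(4)) agrees with the paper, but the heart of your argument diverges from the paper's and, as written, does not close. The paper's key move, which you are missing, is a preliminary dichotomy on $X_2$: if $X_2\neq\varnothing$, then $G(H)\cup X_2$ spans a standard subalgebra corresponding to a non-cocommutative Hopf subalgebra of even dimension $2q^2+4a>2q^2$ dividing $2q^3$, hence equal to $2q^3$; so $H$ is of type $(1,2q^2;2,\frac{q^3-q^2}{2})$ as a coalgebra and \cite[Theorem 6.4]{Bichon} supplies a central group-like element. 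If instead $X_2=\varnothing$, every irreducible left coideal of $H$ has dimension $1,q$ or $2q$, so for any Hopf subalgebra $K\subseteq H^*$ of dimension $2q^2$ (which exists in each remaining case, namely $kG(H^*)$ when $|G(H^*)|=2q^2$ and the subalgebra from Lemma \ref{lem13} when $|G(H^*)|=2q$), the $q$-dimensional coinvariant space $H^{co\pi}$ must decompose entirely into one-dimensional coideals, forcing $H^{co\pi}=k\langle g\rangle$ to be a normal Hopf subalgebra. Without this dichotomy your plan runs into genuine obstructions.

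Concretely: (a) In the case $|G(H^*)|=2q^2$ you assert that $kG(H)$ and $kG(H^*)$ are commutative of index $q$, but group algebras are commutative only for abelian groups, and nothing in the paper (or your argument) shows $G(H)$ or $G(H^*)$ is abelian here; so the Frobenius reciprocity step giving $c=0$ is unjustified. (b) Your plan to produce a self-dual $\chi\in X_q$ with $|G[\chi]|=q^2$ relies, as in the paper's corollary for $|G(H)|=q^3$, on $|X_q|$ being odd; here $|X_q|$ is even (e.g.\ $b=2q-2$ when $a=0$), so the parity argument fails. (c) Most fatally, any Hopf subalgebra containing $kG(H)$ has dimension divisible by $2q^2$ by Nichols--Zoeller, so the subalgebra generated by $kG(H)$ and a second $2q^2$-dimensional Hopf subalgebra is either $2q^2$-dimensional or all of $H$; your hoped-for intermediate case of dimension $q^3$ cannot occur, and the fallback of dualizing fails by the very symmetry you invoke. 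Finally, in the subcase $|G(H^*)|=2q$ with $K\cong k^F$ non-abelian, ``combine the biproduct structure with Lemma \ref{lem12}'' is not sufficient: Lemma \ref{lem12} only excludes elements of $G(D(H)^*)$ whose two components both have order $2$, and with $|G(H)|=2q^2$, $|G(H^*)|=2q$ there remain elements whose components both have order $q$ (or $2q$), for which the argument of Corollary \ref{cor1} produces no central group-like element.
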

\begin{proof}
If $X_2\neq \varnothing$ then there is a non-cocommutative Hopf
algebra $K$ of dimension $2q^2+4a$, corresponding to the standard
subalgebra of $R(H^*)$ spanned by $G(H)\cup X_2$. By Nichols-Zoelle
Theorem, dim$K=2q^3$. Therefore, $H=K$ and $H$ is of type
$(1,2q^2;2,\frac{q^3-q^2}{2})$ as a coalgebra. Then $H$ or $H^*$
contains a non-trivial central group-like element \cite[Theorem
6.4]{Bichon}.

If $X_2=\varnothing$  we then consider the order of $G(H^*)$. By
Lemma \ref{lem10}, if $|G(H^*)|=q^2$ or $q^3$ then $H$ is not
simple. By Proposition \ref{prop2}, if $|G(H^*)|=2$ then $H$ is not
simple. By Lemma \ref{lem13}, if $|G(H^*)|=2q$ then it suffices to
consider the case that $H^*$ has a Hopf subalgebra $K$ of dimension
$2q^2$. In addition, if $|G(H^*)|=2q^2$ then $H^*$ also has a Hopf
subalgebra $K=kG(H^*)$ of dimension $2q^2$. Thus, we consider the
map $\pi: H\to K^*$ obtained by transposing the inclusion
$K\subseteq H^*$. It follows that we have dim$H^{co\pi}=q$. Since
the dimension of every irreducible left coideal of $H$ is $1,q$ or
$2q$, $H^{co\pi}$ decomposes in the form $H^{co\pi}=k\langle
g\rangle$ by Lemma \ref{lem3}, where $g\in G(H)$ is of order $q$.
Hence, $H^{co\pi}$ is normal Hopf subalgebra of $H$.
\end{proof}

\begin{prop}\label{prop5}
If $|G(H)|=q^2$ then $H$ is not simple.
\end{prop}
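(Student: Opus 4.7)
The plan is to enumerate the possible orders of $|G(H^*)|$ (which divides $2q^3$ by Nichols--Zoeller) and show $H$ is not simple in each case. Three values are eliminated immediately: the symmetric form of Lemma~\ref{lem7} rules out $|G(H^*)|=q$; Proposition~\ref{prop3}(1) applied to $H^*$ rules out $|G(H^*)|=q^3$ (otherwise $2\mid |G(H)|=q^2$, contradicting $q$ odd); and $|G(H^*)|=2q^3$ makes $H^*=kG$ with $G$ of order $2q^3$ possessing a normal Sylow $q$-subgroup, since $q\geq 5$ forces $n_q=1$.

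The intermediate values reduce to prior results in the paper. If $|G(H^*)|=1$, then $\gcd(|G(H)|,|G(H^*)|)=1$ and the proof of Corollary~\ref{cor1}, combined with the non-triviality of $G(D(H)^*)$ from Proposition~9.9 of \cite{Etingof2}, yields a non-trivial central group-like in $H$. For $|G(H^*)|=2$, Proposition~\ref{prop2}(3) applied to $H^*$ gives an abelian extension. For $|G(H^*)|=2q$, Lemma~\ref{lem13} applied to $H^*$ provides three sub-cases, yielding respectively a central group-like, a normal Hopf subalgebra of dimension $q$ in $H$, or a commutative Hopf subalgebra of dimension $2q^2$ in $H^*$ to which Lemma~\ref{lem10} applies. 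For $|G(H^*)|=2q^2$, the group algebra $kG(H^*)\subseteq H^*$ already has dimension $2q^2$, and Lemma~\ref{lem10} applies directly.

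The critical remaining case is $|G(H^*)|=q^2$. Applying the dual of Lemma~\ref{lem8} gives $H^*$ the coalgebra type $(1,q^2;q,b';2q,c')$ with $b'+4c'=2q-1$, so $b'$ is odd. Reading Equation~(2.1) modulo $q$ forces $|G[\chi]|\in\{q,q^2\}$ for every $\chi\in X_q(H^*)\cup X_{2q}(H^*)$. Decomposing the $G(H^*)$-action into orbits of size $1$ or $q$ (orbits of size $q^2$ being too large, since $b',c'<q^2$) and writing $b'=b_1'+qb_q'$, $c'=c_1'+qc_q'$ for the fixed-point counts yields the congruence $b_1'+4c_1'\equiv q-1\pmod q$, forcing fixed characters to exist. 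The aim is to produce a self-dual $\chi\in X_q(H^*)$ with $G[\chi]=G(H^*)$: for such $\chi$, one computes $\chi\chi^*=\chi^2=\sum_{g\in G(H^*)}g$, so $\{\chi\}\cup G(H^*)$ spans a $*$-invariant standard subalgebra of $R(H)$ of $q^2+1$ basis elements, corresponding to a Hopf subalgebra $K\subseteq H^*$ of dimension $q^2+q^2=2q^2$. Lemma~\ref{lem10} then produces the extension $k\to k\langle g\rangle\to H\to K^*\to k$.

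The main obstacle is exhibiting the combined self-dual and $G(H^*)$-fixed character of degree $q$. The parity of $b'$ guarantees that self-dual characters of degree $q$ exist, and the mod-$q$ count guarantees that $G(H^*)$-fixed characters exist, but one must show these two subsets intersect non-trivially via a careful orbit analysis on the self-dual subset (whose $G(H^*)$-orbits have size $1$ or $q$). When this intersection fails for $H^*$, the symmetric analysis on $H$ (whose coalgebra coefficient $b$ is also odd, by Lemma~\ref{lem8}) yields instead a Hopf subalgebra of $H$ of dimension $2q^2$, to which the swapped form of Lemma~\ref{lem10} applies under the standing hypothesis $|G(H^*)|=q^2$.
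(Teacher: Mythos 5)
Your reduction of the peripheral cases is sound and essentially parallels the paper: $|G(H^*)|\in\{1,q,q^3,2q^3\}$ are excluded (the paper uses \cite[Theorem 1.6]{Etingof2} for $1$, Lemma \ref{lem7} for $q$, Proposition \ref{prop3} for $q^3$, and non-triviality of $H$ for $2q^3$), and the cases $2$, $2q$, $2q^2$ are delegated to Proposition \ref{prop2}, Lemma \ref{lem13} plus Lemma \ref{lem10}, and Proposition \ref{prop4} (or equivalently Lemma \ref{lem10}), exactly as in the paper.

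The problem is the critical case $|G(H)|=|G(H^*)|=q^2$, where your argument has a genuine gap that you yourself flag as ``the main obstacle'' and then do not close. Your counting shows that self-dual characters in $X_q$ exist (since $b$ is odd) and that $G(H^*)$-fixed irreducible characters exist (since $b_1'+4c_1'\equiv -1 \pmod q$), but neither fact, nor their combination, produces a \emph{self-dual, fixed character of degree $q$}: the congruence is consistent with $b_1'=0$ and $c_1'=(q-1)/4$, i.e.\ with every fixed character having degree $2q$ (and a fixed self-dual $\chi$ of degree $2q$ cannot close up with $G(H^*)$ alone, since $4q^2-q^2$ is not divisible by $2q$ when $q$ is odd). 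The fallback clause --- that failure for $H^*$ forces success for $H$ --- is asserted without any argument; there is no duality mechanism offered that would transfer the obstruction from one side to the other. The paper takes a different and complete route here: it considers $\pi:H\to k^{G(H^*)}$, so $\dim H^{co\pi}=2q$, and uses Lemma \ref{lem3} together with the coideal dimensions $1,q,2q$ from Lemma \ref{lem8} to force $H^{co\pi}=k\langle g\rangle\oplus V$ with $g$ of order $q$ and $V$ an irreducible coideal of dimension $q$; since $gV=V=Vg$, \cite[Corollary 3.5.2]{Natale4} makes $k\langle g\rangle$ normal in the Hopf subalgebra $k[C]$ generated by the simple subcoalgebra containing $V$, and the three possible values $\dim k[C]\in\{2q^2,q^3,2q^3\}$ are each dispatched (by Lemma \ref{lem10}, by \cite{Kobayashi}, or directly). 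You would need either to supply the missing orbit analysis guaranteeing a self-dual fixed $\chi\in X_q$, or to switch to a coinvariant/coideal argument of this kind.
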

\begin{proof}
By Proposition \ref{prop3}, we may assume that $|G(H^*)|\neq q^3$.

If $|G(H^*)|=2$ or $2q^2$ then the proposition follows from
Proposition \ref{prop2} and Proposition \ref{prop4}.

By Lemma \ref{lem13}, if $|G(H^*)|=2q$ then it suffices to consider
the case that $H^*$ has a Hopf subalgebra $K$ of dimension $2q^2$.
Considering the map $\pi:H\to K^*$ obtained by transposing the
inclusion $K\subseteq H^*$, we have ${\rm dim}H^{co\pi}=q$. Notice
that the dimension of every irreducible left coideal of $H$ is $1,q$
or $2q$ by Lemma \ref{lem8}. It follows from Lemma \ref{lem3} that,
as a left coideal of $H$, $H^{co\pi}$ decomposes in the form
$H^{co\pi}=k\langle g\rangle$, where $g\in G(H)$ is of order $q$.
Thus, $H^{co\pi}$ is a normal Hopf subalgebra of $H$.

Finally, we consider the case that $|G(H^*)|=q^2$. Considering the
map $\pi:H\to k^{G(H^*)}$ obtained by transposing the inclusion
$kG(H^*)\subseteq H^*$, we have ${\rm dim}H^{co\pi}=2q$. Therefore,
by Lemma \ref{lem3}, as a left coideal of $H$, $H^{co\pi}$
decomposes in the form $H^{co\pi}=k\langle g\rangle\oplus V$, where
$g\in G(H)$ is of order $q$, and $V$ is an irreducible left coideal
of $H$ of dimension $q$. Since $gV$ and $Vg$ are irreducible left
coideals of $H$ isomorphic to $V$, and $gV, Vg$ are contained in
$H^{co\pi}$, we have $gV=V=Vg$. Then \cite[Corollary 3.5.2]{Natale4}
shows that $k\langle g\rangle$ is a normal Hopf subalgebra of
$k[C]$, where $C$ is the simple subcoalgebra of $H$ containing $V$,
and $k[C]$ is a Hopf subalgebra of $H$ generated by $C$ as an
algebra. Clearly, ${\rm dim}k[C]\geq q+q^2$. Moreover, by
Nichols-Zoeller Theorem \cite{Nichols2}, ${\rm dim}k[C]=2q^3,q^3$ or
$2q^2$. If ${\rm dim}k[C]=2q^3$ then $k[C]=H$ and $k\langle
g\rangle$ is a normal Hopf subalgebra of $H$. If ${\rm dim}k[C]=q^3$
then the result follows from \cite{Kobayashi}. If ${\rm
dim}k[C]=2q^2$ then the result follows from Lemma \ref{lem10}.
\end{proof}

By the main result in \cite{Etingof3}, $D(H)$ is of Frobenius type.
Together with Magid's result recalled in Subsection \ref{sec23}, the
dimension of every simple Yetter-Drinfeld $H$-module divides the
dim$H$.

With respect to the left adjoint action $ad : H\otimes H\to H, (ad
h)(a) = h_1aS(h_2)$ and the left regular coaction $\Delta: H \to
H\otimes H$, $H$ becomes a Yetter-Drinfeld $H$-module.

The Yetter-Drinfeld submodules $V \subseteq H$ are exactly the left
coideals $V$ of $H$ such that $h_1V S(h_2)\subseteq V$ , for all $h
\in H$. Thus, a $1$-dimensional Yetter-Drinfeld submodule of $H$ is
exactly the span of a central group-like element of $H$. In
particular, if $\pi:H\to K$ is a Hopf algebra surjection then
$H^{co\pi}$ is a Yetter-Drinfeld submodule of $H$.

We shall afford two quite different proofs of the following
proposition. The first one was pointed to the authors by professor
S. Natale.
\begin{prop}\label{prop6}
If $|G(H)|=2q$ then $H$ is not simple.
\end{prop}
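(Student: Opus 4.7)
The plan is to leverage the trichotomy furnished by Lemma \ref{lem13} and settle the remaining case with a Yetter-Drinfeld argument. In case (1) of that lemma, a non-trivial central group-like element of $H$ or $H^*$ spans a non-trivial normal Hopf subalgebra, so $H$ is not simple; in case (2), $H^*$ already admits a normal Hopf subalgebra of dimension $q$, which dualises to a normal quotient of $H$ and again shows $H$ is not simple. The whole work therefore concentrates on case (3): $H$ contains a commutative Hopf subalgebra $K\cong k^F$ of dimension $2q^2$ with $F$ non-abelian, and every irreducible $H$-module has dimension at most $q$.

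Assume for contradiction that $H$ is simple in case (3). Then neither $H$ nor $H^*$ contains a non-trivial central group-like element, and by the description recalled just before the statement, the only one-dimensional Yetter-Drinfeld submodule of $H^*$ is $k\cdot 1$. Dualizing the inclusion $K\subseteq H$ yields a Hopf surjection $\phi: H^*\twoheadrightarrow K^*=kF$, and the coinvariant subalgebra $(H^*)^{co\phi}$ has dimension $q$ by the formula in Subsection \ref{sec2-2}. By the remark immediately preceding the proposition, $(H^*)^{co\phi}$ is also a Yetter-Drinfeld submodule of $H^*$.

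Decompose $(H^*)^{co\phi}$ into simple Yetter-Drinfeld submodules. The trivial summand $k\cdot 1$ occurs once, and every other simple summand has dimension at least $2$ and, by the Frobenius property of $D(H^*)$ from Subsection \ref{sec23}, divides $\dim H^*=2q^3$. For $q\geq 5$ we have $q-1<q$, so the only available dimension is $2$; hence $(H^*)^{co\phi}=k\cdot 1\oplus V_1\oplus\cdots\oplus V_{(q-1)/2}$ with each $V_i$ a two-dimensional simple Yetter-Drinfeld submodule of $H^*$.

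The main obstacle is extracting a contradiction from this rigid decomposition. The plan is to observe that each $V_i$, being a two-dimensional simple left coideal of $H^*$, must sit inside a $2\times 2$ matrix subcoalgebra $M_2(k)\subseteq H^*$, because a $q\times q$ matrix subcoalgebra admits only left coideals of dimensions divisible by $q$ while a one-dimensional subcoalgebra is too small. One then combines (a) the adjoint stability of the $V_i$ under all of $H^*$, (b) the multiplicative closure of $(H^*)^{co\phi}$ as a subalgebra of $H^*$, and (c) the defining property $\phi(V_i)=0$, to force a forbidden piece of structure: either a non-trivial one-dimensional Yetter-Drinfeld summand (i.e.\ a non-trivial central group-like element of $H^*$), or a Hopf subalgebra of $H^*$ of dimension $q$ that must be normal by the smallest-prime-index argument used in Proposition \ref{prop1}, each contradicting the assumed simplicity of $H$. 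This structural step---reconciling adjoint stability with the coalgebra geometry of the $V_i$---is the most delicate part of the plan, and is where a direct inspection of how $G(H^*)$ conjugates the simple subcoalgebras of $H^*$ is expected to do the real work.
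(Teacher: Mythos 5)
Your reduction via Lemma \ref{lem13} is correct, and your Yetter--Drinfeld decomposition of the $q$-dimensional coinvariant space is essentially the paper's own first proof transported to the dual side: the paper takes $\pi:H\to kF$ dual to $k^F\subseteq H^*$, splits $H^{co\pi}$ either as $k\langle g\rangle$ (done) or as $k1\oplus\sum_iV_i$ with $\dim V_i=2$, and then argues exactly as you do that every nontrivial irreducible Yetter--Drinfeld summand has dimension $2n$ with $n\mid q^3$ and $n\le\frac{q-1}{2}$, forcing $n=1$. Up to the conclusion that the $V_i$ are two-dimensional irreducible Yetter--Drinfeld modules, your argument is sound.

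The gap is the final step: you never actually derive a contradiction. Your last paragraph is an announced plan whose decisive move --- combining adjoint stability, multiplicative closure of $(H^*)^{co\phi}$ and $\phi(V_i)=0$ to ``force a forbidden piece of structure'' --- is not carried out, and the two structures you hope to force look unreachable by that route: $k1\oplus\bigoplus_iV_i$ is only a coideal subalgebra, not a subcoalgebra (each $V_i$ sits inside a $2\times2$ matrix subcoalgebra), so there is no visible $q$-dimensional Hopf subalgebra of $H^*$; and a nontrivial one-dimensional Yetter--Drinfeld summand is already excluded by the decomposition you fixed at the outset. The paper closes this step concretely via the Drinfeld double: a two-dimensional irreducible $D(H)$-module gives an irreducible character $\chi$ of $D(H)^*$ of degree $2$, and since $3\nmid\dim D(H)=4q^6$ there is no degree-$3$ irreducible, so the decomposition (2.1) of $\chi\chi^*$ forces $G[\chi]\subseteq G(D(H)^*)$ to be nontrivial and hence to contain an element $g\otimes\eta$ of order $2$; if $g$ or $\eta$ is trivial the other is a nontrivial central group-like element of $H^*$ or $H$, and if both have order $2$ this contradicts Lemma \ref{lem12} (whose hypothesis $G(H)\subseteq K\cong k^F$ holds because $K$ corresponds to the standard subalgebra spanned by $G(H)\cup X_2$). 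You need this ingredient, or a genuine substitute for it; note also that invoking Lemma \ref{lem12} on the dual side as your setup would require presupposes the prior reduction to $|G(H^*)|=2q$ with $H^*$ also containing a commutative Hopf subalgebra of dimension $2q^2$, which your proposal does not establish.
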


\begin{proof}[First proof]
By the discussion above, it suffices to consider the case that
$|G(H^*)|=2q$, and $H$ and $H^*$ both contain commutative Hopf
subalgebras of dimension $2q^2$, see Lemma \ref{lem13}. Notice that,
in this case, $H$ is of type $(1,2q;2,\frac{q^2-q}{2};q,2q-2)$ as a
(co)algebra.

Let $k^G\subseteq H$ and $k^F\subseteq H^*$ be the commutative Hopf
subalgebras of dimension $2q^2$ obtained in Lemma \ref{lem13}.
Considering the projection $\pi:H\to kF$ obtained by transposing the
inclusion $k^F\subseteq H^*$, we have ${\rm dim}H^{co\pi}=q$. There
are two possible decompositions of $H^{co\pi}$ as a left coideal of
$H$:

(1)\,$H^{co\pi}=k\langle g\rangle$, where $g\in G(H)$ is of order
$q$.

(2)\,$H^{co\pi}=k1\oplus \sum_iV_i$, where $V_i$ is an irreducible
left coideal of $H$ of dimension $2$.

If the first one holds true then $H$ is not simple, and $H$ also
fits into an abelian extension.

If the second one holds true then we may assume $H^{co \pi}
\subseteq k^G$. Notice that the number of irreducible coideals of
dimension $2$ in $H^{co \pi}$ is $\frac{q-1}{2}$.

Note that $H^{co \pi}$ is a Yetter-Drinfeld submodule of $H$, and
decompose it as a sum of irreducible Yetter-Drinfeld submodules. We
may assume that there is a unique summand (spanned by $1$) of
dimension $1$ (otherwise $H$ would contain a central group-like
element, and we are done).

Now, the dimension of every such irreducible summand $W$ must divide
the dimension of $H$. On the other hand, it must be of the form
$2n$, where $1 \leq n \leq \frac{q-1}{2}$ (this follows after
decomposing $W$, which is a coideal, into a sum of irreducible
coideals). So that $n = 1$, because $n = q^k > \frac{q-1}{2}$ if $k
> 1$.

But this implies that the $V_i$'s are Yetter-Drinfeld submodules of
$H$. In particular, $D(H)$ has irreducible modules of dimension $2$
and thus $G(D(H)^*)$ has an element $g \times \eta$ of order $2$,
since $D(H)$ does not has irreducible modules of dimension $3$. This
contradicts Lemma \ref{lem12}.
\end{proof}

\begin{proof}[Second proof]
If the second one holds true then $H^{co\pi}\subseteq K$ and
$K^{co\pi|_{K}}=H^{co\pi}$, where we write $K=k^G$. Then ${\rm
dim}K={\rm dim}K^{co\pi|_{K}}{\rm dim}\pi(K)$ \cite[Lemma
1.3.4]{Natale4} implies that ${\rm dim}\pi(K)=2q$. Consider the
surjection $\pi|_{K}:K\to \pi(K)$. Since $kG(K)\cap
K^{co\pi|_{K}}=k1$, we have that $\pi|_{kG(K)}:kG(K)\to \pi(K)$ is
an isomorphism. Hence $K=K^{co\pi|_{K}}\#kG(K)$ is a biproduct by
the Radford projection theorem \cite[Theorem 3]{Radford}. Since $K$
is commutative, $kG(K)$ is normal in $K$. It follows that
$K^{co\pi|_{K}}$ is an ordinary Hopf algebra, since
$K^{co\pi|_{K}}\cong K/K(kG(K))^+$ as a coalgebra. Since $G(K)$ is
cyclic and $K^{co\pi|_{K}}$ is a group algebra of a cyclic group of
order $q$, we know that $kG(K)$ and $K^{co\pi|_{K}}$ are both
self-dual. Therefore, $K^*\cong (K^{co\pi|_{K}})^*\#k^{G(K)}\cong
K^{co\pi|_{K}}\#kG(K)=K$. But this contradicts the fact that the
finite group $G$ is non-abelian.
\end{proof}

Up to now, we have examined every possible order of $G(H)$ and
proved that $H$ is not simple in all cases. Therefore, together with
Proposition \ref{prop1} and the well-known Burnside's
$p^aq^b$-Theorem, we obtain our main theorem.

\begin{thm}
Let $A$ be a semisimple Hopf algebra of dimension $2q^3$, where $q$
is a prime number. Then $A$ is semisolvable.
\end{thm}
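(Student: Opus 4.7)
The plan is to reduce the problem to showing that $H$ is not simple as a Hopf algebra, via Proposition \ref{prop1}, and then to carry out a case analysis on $|G(H)|$. First I would dispose of the small primes: the case $q=2$ (dimension $16$) follows from \cite[Theorem 3.5]{Montgomery}, while $q=3$ (dimension $54$) is \cite[Chapter 12]{Natale4}. So from now on I assume $q\geq 5$, and by Proposition \ref{prop1} it suffices to prove $H$ is not simple (the trivial case where $H$ is already a group algebra corresponds to $|G(H)|=2q^3$, and by Burnside's $p^aq^b$-theorem combined with \cite[Theorem 1.6]{Etingof2} we have $|G(H)|\neq 1$).

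Next I would list the possible values of $|G(H)|$. By Nichols-Zoeller, $|G(H)|$ divides $2q^3$, giving candidates $1, 2, q, 2q, q^2, 2q^2, q^3, 2q^3$. Lemma \ref{lem7} rules out $|G(H)|=q$, and the extreme cases $|G(H)|=1$ and $|G(H)|=2q^3$ are handled above. This leaves exactly the five orders $|G(H)|\in\{2, 2q, q^2, 2q^2, q^3\}$, which are precisely the cases addressed by Propositions \ref{prop2}--\ref{prop6}. So the theorem reduces to assembling those propositions:
\begin{itemize}
\item $|G(H)|=2$: Proposition \ref{prop2} shows in every sub-case for $|G(H^*)|$ that $H$ (or $H^*$) sits inside an abelian extension, hence is not simple.
\item $|G(H)|=q^3$: Proposition \ref{prop3} gives that $kG(H)$ is normal of index $2$ by \cite{Kobayashi}, so $H$ is not simple.
\item $|G(H)|=2q^2$: Proposition \ref{prop4} produces either a non-trivial central group-like element (via \cite[Theorem 6.4]{Bichon}) or, by transposing a $2q^2$-dimensional Hopf subalgebra of $H^*$, a normal group-like $k\langle g\rangle\subseteq H$.
\item $|G(H)|=q^2$: Proposition \ref{prop5} similarly exhibits a normal Hopf subalgebra using the coideal structure of $H^{co\pi}$.
\item $|G(H)|=2q$: Proposition \ref{prop6} uses Lemma \ref{lem13} together with the Yetter-Drinfeld argument on $H^{co\pi}$.
\end{itemize}

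I expect the main obstacle to be the case $|G(H)|=2q$, which is why Proposition \ref{prop6} admits two quite different proofs. The difficulty is that when the natural candidate $H^{co\pi}$ (with $\pi:H\to kF$ obtained from $k^F\subseteq H^*$) fails to be a group algebra $k\langle g\rangle$, one must analyze its alternative decomposition as a direct sum of two-dimensional irreducible coideals. Handling this alternative requires either (a) decomposing $H^{co\pi}$ further as a Yetter-Drinfeld submodule and invoking Lemma \ref{lem12} via the induced element of $G(D(H)^*)$, or (b) using the Radford projection theorem to recognize $K=k^G$ as a biproduct $K^{co\pi|_K}\#kG(K)$ and deriving a contradiction with the non-abelianness of $G$ given by Lemma \ref{lem11}. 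Once all five orders are cleared, $H$ is not simple, Proposition \ref{prop1} finishes the argument, and the theorem follows.
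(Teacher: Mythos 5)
Your proposal is correct and follows essentially the same route as the paper: reduce to non-simplicity via Proposition \ref{prop1}, dispose of $q=2,3$ by citation, exclude $|G(H)|=1$ and $|G(H)|=q$ by \cite[Theorem 1.6]{Etingof2} and Lemma \ref{lem7}, and then assemble Propositions \ref{prop2}--\ref{prop6} for the remaining five orders of $G(H)$. Your reading of where the difficulty lies (the $|G(H)|=2q$ case and the two alternative arguments for Proposition \ref{prop6}) matches the paper's own emphasis.
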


\begin{rmk}
The notion of solvability for semisimple Hopf algebras was
introduced by Etingof et al \cite{Etingof2}. A semisimple Hopf
algebra is called solvable if the category of its finite dimensional
representations is a solvable fusion category. This notion can also
be viewed as a generalization of the notion of solvability for
finite groups. However, the interrelation between solvability and
semisolvability for semisimple Hopf algebras is not clear enough.
For example, Etingof et al  proved \cite[Theorem 1.6]{Etingof2} that
semisimple Hopf algebras of dimension $p^aq^b$ are solvable, while
Galindo and Natale constructed \cite{Galindo} a class of semisimple
Hopf algebras of dimension $p^2q^2$ which is simple as a Hopf
algebra. Our result shows that semisimple Hopf algebras of dimension
$2q^3$ are both solvable and semisolvable.
\end{rmk}

\textbf{Acknowledgments:}\quad The authors are very grateful to
professor Sonia Natale for numerous discussions and very valuable
suggestions about this paper, in particular for providing a new
proof of Proposition \ref{prop6}. This work was partially supported
by the Natural Science Foundation of China (11201231), the China
postdoctoral science foundation (2012M511643), the Jiangsu planned
projects for postdoctoral research funds (1102041C) and Agricultural
Machinery Bureau Foundation of Jiangsu Province (GXZ11003).

\end{document}